\newcommand{\tr}{{{\mathsf T}}}
\newcommand{\mK}{{\mathsf{K}}}
\newcommand{\blue}[1]{{\color{blue} #1}}
\newtheorem{definition}{Definition}
\newtheorem{theorem}{Theorem}
\newtheorem{remark}{Remark}
\newtheorem{lemma}{Lemma}
\newtheorem{corollary}{Corollary}
\newtheorem{assumption}{Assumption}
\crefname{equation}{}{}
\crefname{theorem}{Theorem}{Theorems}
\crefname{corollary}{Corollary}{Corollaries}
\crefname{example}{Example}{Examples}
\crefname{assumption}{Assumption}{Assumptions}
\crefname{lemma}{Lemma}{Lemmas}
\crefname{proposition}{Proposition}{Propositions}
\crefname{figure}{Figure}{Figures}
\crefname{table}{Table}{Tables}
\crefname{section}{Section}{Sections}
\crefname{appendix}{Appendix}{Appendices}
\Crefname{equation}{}{}
\Crefname{theorem}{Theorem}{Theorems}
\Crefname{corollary}{Corollary}{Corollaries}
\Crefname{example}{Example}{Examples}
\Crefname{lemma}{Lemma}{Lemma}
\Crefname{proposition}{Proposition}{Proposition}
\Crefname{figure}{Figure}{Figures}
\Crefname{table}{Table}{Tables}
\Crefname{section}{Section}{Sections}
\Crefname{appendix}{Appendix}{Appendices}
\title{\LARGE \bf
Convex Parameterization of Stabilizing Controllers and its LMI-based Computation via Filtering
}
\author{Mauricio C. de Oliveira$^{1}$ and Yang Zheng$^{2}$% <-this % stops a space
%\thanks{*This work was not supported by any organization}% <-this % stops a space
\thanks{$^{1}$M. C. de Oliveira is with the Department of Mechanical and Aerospace Engineering, University of California San Diego, CA 92093, USA. ({mauricio@ucsd.edu})}%
\thanks{$^{2}$Y. Zheng is with the Department of Electrical and Computer Engineering, University of California San Diego, CA 92093, USA. ({zhengy@eng.ucsd.edu})}%
}
\begin{document}

\maketitle
\thispagestyle{empty}
\pagestyle{empty}

%%%%%%%%%%%%%%%%%%%%%%%%%%%%%%%%%%%%%%%%%%%%%%%%%%%%%%%%%%%%%%%%%%%%%%%%%%%%%%%%
\begin{abstract}

Various new implicit parameterizations for stabilizing controllers that allow one to impose structural constraints on the controller have been proposed lately. They are convex but infinite-dimensional, formulated in the frequency~domain with no available efficient methods for computation. In this paper, we introduce a kernel version of the Youla parameterization to characterize the set of stabilizing controllers. It features a single affine constraint, which allows us to recast the controller parameterization as a novel robust filtering problem. This~makes it possible to derive %what we believe to be 
the \textit{first} efficient Linear Matrix Inequality (LMI) implicit parametrization of stabilizing controllers. Our LMI characterization not only admits efficient numerical computation, but also guarantees a full-order stabilizing dynamical controller that is efficient for practical deployment. Numerical experiments demonstrate that our LMI can be orders of magnitude faster to solve than the existing closed-loop parameterizations.
\end{abstract}

%%%%%%%%%%%%%%%%%%%%%%%%%%%%%%%%%%%%%%%%%%%%%%%%%%%%%%%%%%%%%%%%%%%%%%%%%%%%%%%%
\section{Introduction}

One basic yet fundamental problem in control theory is that of designing a feedback controller to stabilize a dynamical system~\cite[Chapter 12]{zhou1996robust}. Any controller synthesis method needs to implicitly or explicitly include stability as a constraint, since feedback systems must be stable for practical deployment. When the system state is directly measured, it is sufficient to consider a static state feedback $u = Kx$ with a constant matrix $K$. In this case, the set of stabilizing gains can be characterized by a Lyapunov inequality. If we only have  output measurements, a static output feedback is insufficient to get good closed-loop performance. Instead, we need to consider the class of dynamical controllers~\cite{zhou1996robust,boyd1991linear,francis1987course}. 

It is well-known that the set of stabilizing~dynamical~controllers is characterized by the classical \textit{Youla parameterization}~\cite{youla1976modern} in the frequency domain, which requires  a \textit{doubly coprime factorization} of the system. Many closed-loop performances can be further addressed via convex optimization in the Youla framework; see~\cite{boyd1991linear} for extensive discussions.~In the past few years, a classical notion of \textit{closed-loop convexity} (coined in~\cite[Chapter 6]{boyd1991linear}) has regained increasing~attention thanks to its flexibility in addressing distributed control and data-driven control problems~\cite{anderson2019system,furieri2019input,zheng2021sample,dean2020sample,zhang2021sample,zheng2019equivalence,tseng2021realization,zheng2019systemlevel,wang2019system,furieri2019sparsity,rotkowitz2005characterization}. One common underlying idea is to parameterize stabilizing dynamical controllers using certain closed-loop responses in a convex way, which shifts %the controller synthesis task 
from designing a controller to designing desirable closed-loop responses. One main benefit is that designing~closed-loop responses becomes a convex problem in many distributed and data-driven control setups~\cite{rotkowitz2005characterization,wang2019system,furieri2019sparsity}.  

In particular, a system-level parameterization (SLP) was introduced in~\cite{wang2019system}, and an input-output parameterization (IOP) was proposed in~\cite{furieri2019input}; both of them characterize the set of all stabilizing dynamical controllers with no need of computing a doubly-coprime factorization explicitly. As expected, Youla, SLP, and IOP are equivalent to each other in theory, which has been first proved in~\cite{zheng2019equivalence} and later discussed in~\cite{tseng2021realization}. Very recently, the work~\cite{zheng2019systemlevel} has further characterized all convex parameterizations of stabilizing controllers using closed-loop responses, revealing two new parameterizations beyond SLP and IOP. Thanks to convexity, these closed-loop parameterizations have become powerful tools in addressing various distributed control problems~\cite{anderson2019system,furieri2019sparsity}, and quantifying the performance of data-driven control~\cite{dean2020sample,zheng2021sample,zhang2021sample}. 

While convexity is one desirable feature in closed-loop parameterizations, the resulting convex problems are unfortunately always \textit{infinitely dimensional} since the decision variables are transfer functions in the frequency domain. The classical work~\cite{boyd1991linear} and all the recent studies~\cite{anderson2019system,wang2019system,furieri2019input,zheng2021sample,dean2020sample,zhang2021sample,zheng2019equivalence,tseng2021realization,zheng2019systemlevel,furieri2019sparsity} apply Ritz or finite impulse response (FIR) approximations for numerical computation. However, the Ritz or FIR approximations do not scale well in both computational efficiency and controller implementation since they lead to large-scale optimization problems and result in dynamical controllers of impractical high-order. Moreover, a subtle notion~of~\textit{numerical robustness}~\cite[Section 6]{zheng2019systemlevel} arises on the SLP~\cite{wang2019system} and IOP~\cite{furieri2019input} due to the FIR approximation that may affect internal stability in practical computation.   

In this paper, we present the \textit{first} computationally efficient linear matrix inequality (LMI) characterization for a closed-loop parameterization of stabilizing dynamical controllers. To achieve this,  we first introduce a ``kernel'' version of the Youla parameterization. Unlike SLP~\cite{wang2019system}, IOP~\cite{furieri2019input} and the mixed parameterizations~\cite{zheng2019systemlevel}, our new parameterization only requires one single affine constraint. This feature leads to a new robust $\mathcal{H}_\infty$ filtering problem, which  allows us to derive an LMI for efficient computation. Note that our filtering problem is different from the classical setup (cf.~\cite{geromel2000h,geromel2002robust}), and thus our LMI characterization might have independent interest. Numerical experiments show that our LMI can be orders of magnitude faster to solve than FIR approximations. % in SLP. 

The rest of this paper is organized as follows. We present the problem statement in \cref{section:problem-statement}. Our new parameterization is presented in \Cref{section:parameterization}, and its LMI characterization  is introduced in \Cref{section:filtering}. Numerical results are shown in \Cref{section:experiments}. We conclude the paper in \Cref{section:conclusion}. %Some proofs are in the appendix. 

%\vspace{1pt}
% \noindent\emph{Notation:} Most notation is standard in this paper. We use lower and upper case letters (\emph{e.g.} $x$ and $A$) to denote vectors and matrices, respectively. We use lower and upper case boldface letters (\emph{e.g.} $\mathbf{x}$ and $\mathbf{G}$) to denote signals and transfer matrices, respectively. We denote the set of real-rational proper stable transfer matrices as $\mathcal{RH}_{\infty}$. 
% % The state-space realization $C(zI - A)^{-1}B + D$ is denoted as
% %  $ \left[\begin{array}{c|c} A & B \\\hline
% %      C & D\end{array}\right].$

%\clearpage 

\section{Preliminaries and Problem Statement} \label{section:problem-statement}

\subsection{System model and internal stability}

We consider a strictly proper linear time-invariant (LTI) plant in the discrete-time domain\footnote{Unless specified otherwise, all the results in this paper can be generalized to continuous-time systems.}
\begin{equation}\label{eq:LTI}
    \begin{aligned}
        x[t+1] &= A x[t] + B u[t] + \delta_x[t],\\
        y[t]   &= Cx[t]  + \delta_y[t],
    \end{aligned}
\end{equation}
where $x[t] \in \mathbb{R}^{n},u[t]\in \mathbb{R}^{m},y[t]\in \mathbb{R}^{p}$ are the state, control action, and measurement vector at time $t$, respectively, and $\delta_x[t]\in \mathbb{R}^{n}$ and $\delta_y[t]\in \mathbb{R}^{p}$ are disturbances on the state and measurement vectors at time $t$, respectively. The transfer matrix from $\mathbf{u}$ to $\mathbf{y}$ is
$
    \mathbf{G} = C(zI - A)^{-1}B,
$
where $z \in \mathbb{C}$. 

Consider an output-feedback LTI dynamical controller
\begin{equation}\label{eq:LTIController}
    \mathbf{u} = \mathbf{K}\mathbf{y} + \bm{\delta}_u,
\end{equation}
where $\bm{\delta}_u $ is the external disturbance on the control action. The controller~\cref{eq:LTIController} has a state-space realization as
\begin{equation}\label{eq:state_space_controller}
    \begin{aligned}
        \xi[t+1] &= A_\mK \xi[t] + B_\mK y[t],\\
        u[t]   &= C_\mK\xi[t] + D_\mK y[t]+ \delta_u[t],
    \end{aligned}
\end{equation}
where $\xi[t]\in \mathbb{R}^{q}$ is the controller internal state at time $t$,  and $A_\mK \in \mathbb{R}^{q \times q}, B_\mK \in \mathbb{R}^{q \times p}, C_\mK \in \mathbb{R}^{m \times q}, D_\mK \in \mathbb{R}^{m \times p}$ specify the controller dynamics. We call $q$ the order of the controller $\mathbf{K}$.  %{The formulation~\cref{eq:state_space_controller} above reduces to a static controller when $(A_\mK,B_\mK,C_\mK,D_\mK)=(0,0,0,K)$ for some $K \in \mathbb{R}^{m \times p}$. }
Applying the controller~\cref{eq:LTIController} to the plant~\cref{eq:LTI} leads to a closed-loop system shown in \Cref{Fig:LTI}. We make the following standard assumption.
%\vspace{-4mm}
\begin{assumption} \label{assumption:stabilizability}
    The plant is stabilizable and detectable, \emph{i.e.}, $(A, B)$ is stabilizable, and $(C,A)$ is detectable.
\end{assumption}

    The closed-loop system must be stable in some appropriate sense, and any controller synthesis procedure implicitly or explicitly involves a stability constraint~\cite{zhou1996robust,dullerud2013course,boyd1991linear,youla1976modern,francis1987course,wang2019system,furieri2019input,zheng2019equivalence}. A standard~notion is \textit{internal stability}, defined as~\cite[Chapter 5.3]{zhou1996robust}:
\begin{definition}
    The system in \Cref{Fig:LTI} is \emph{internally stable} if it is well-posed, and the states $(x[t],\xi[t])$ converge to zero as $t\rightarrow \infty$ for all initial states $(x[0],\xi[0])$ when $\delta_x[t] = 0, \delta_y[t]=0, \delta_u[t] = 0, \forall t$.
\end{definition}

  The interconnection in \Cref{Fig:LTI} is always well-posed since the plant is strictly proper~\cite[Lemma 5.1]{zhou1996robust}. 
We say the controller $\mathbf{K}$ \emph{internally stabilizes} the plant $\mathbf{G}$ if the closed-loop system in \Cref{Fig:LTI} is {internally stable}. The set of all internally stabilizing LTI dynamical controllers is defined as
\begin{equation}
    \mathcal{C}_{\text{stab}} := \{\mathbf{K} \mid \mathbf{K} \; \text{internally stabilizes} \; \mathbf{G}\}.
\end{equation}
\begin{figure}[t]
\setlength{\belowcaptionskip}{-10pt}
\setlength{\abovecaptionskip}{-10pt}
  \centering
  \begin{center}
  \setlength{\unitlength}{0.008in}
\begin{picture}(243,170)(140,410)
\thicklines

%LOWER-LEFT Corner
\put(147,440){\vector(1, 0){ 93}}
\put(141,540){\vector( 0, -1){ 95}}
\put(88,440){\vector( 1, 0){ 48}}
\put(141,440){\circle{10}}
%\put(180,510){\line( 1, 0){ 48.5}}
\put(170,445){\makebox(0,0)[lb]{$\mathbf{y}$}}
\put(100,445){\makebox(0,0)[lb]{$\delta_{\mathbf{y}}$}}
\put(145,452){\makebox(0,0)[lb]{\small +}}
\put(123,446){\makebox(0,0)[lb]{\small +}}

%UPPER-RIGHT Corner

\put(385,540){\circle{10}}
\put(380,540){\vector( -1, 0){ 25}}
\put(385,439.5){\vector( 0,1){ 95.5}}
\put(438,540){\vector(-1, 0){ 48}}
\put(410,545){\makebox(0,0)[lb]{$\delta_{\mathbf{u}}$}}
\put(365,545){\makebox(0,0)[lb]{$\mathbf{u}$}}
\put(220,545){\makebox(0,0)[lb]{$\mathbf{x}$}}
\put(398,545){\makebox(0,0)[lb]{\small +}}
\put(390,524){\makebox(0,0)[lb]{\small +}}
\put(266,440){\line(1, 0){ 119.5}}

\put(240,427){\framebox(25,25){}} %K box
\put(245,435){\makebox(0,0)[lb]{$\mathbf{K}$}}

\put(330,527){\framebox(25,25){}} %B box
\put(335,535){\makebox(0,0)[lb]{$B$}}

\put(312,545){\makebox(0,0)[lb]{\small +}}
\put(302,550){\makebox(0,0)[lb]{\small +}}
\put(302,525){\makebox(0,0)[lb]{\small +}}

%%INNER CICLE
\put(329,540){\vector(-1, 0){26}}
\put(298,540){\circle{10}}
\put(298,580){\vector(0, -1){35}}
\put(301,570){\makebox(0,0)[lb]{$\delta_{\mathbf{x}}$}}
\put(293,540){\vector(-1, 0){25}}

\put(243,527){\framebox(25,25){}} %z^-1 box
\put(246,534){\makebox(0,0)[lb]{\small  $z^{\text{-}1}$}}

\put(242,540){\vector(-1, 0){46}}
\put(219,540){\line(0, -1){46}}
\put(218.5,494){\vector(1, 0){24}}

\put(243,482){\framebox(25,25){}} %z^-1 box
\put(248,490){\makebox(0,0)[lb]{$A$}}

\put(268.5,494){\line(1, 0){29.7}}
\put(298.5,493.5){\vector(0, 1){42}}

%C Part
\put(171,527){\framebox(25,25){}} %z^-1 box
\put(176,535){\makebox(0,0)[lb]{$C$}}
\put(170.5,540){\line(-1, 0){30}}

%RED BOX?

{\color{red}
\put(160,475){\dashbox(203,90){}} %B box
\put(339,480){\makebox(0,0)[lb]{$\mathbf{G}$}}
}

\end{picture}
  \end{center}
  \caption{Interconnection of the plant $\mathbf{G}$ and the controller $\mathbf{K}$.}\label{Fig:LTI}
\end{figure}
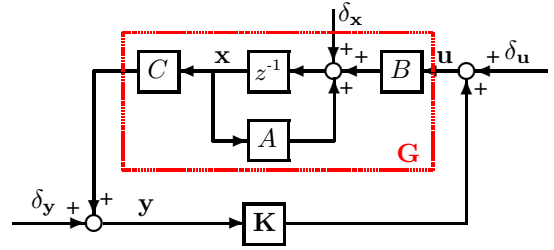
We have a standard state-space characterization for $\mathcal{C}_{\text{stab}}$.
\begin{lemma}[\!{\cite[Lemma 5.2]{zhou1996robust}}] \label{lemma:internalstability}
    $\mathbf{K}$ \emph{internally stabilizes}  $\mathbf{G}$ if and only if the following closed-loop matrix $A_{\mathrm{cl}}$ is stable.
    \vspace{-1mm}
    \begin{equation} \label{eq:closed_loop_matrix}
    A_{\mathrm{cl}} := \begin{bmatrix}
    A+BD_\mK C & BC_\mK\\
  B_\mK C & A_\mK
    \end{bmatrix}.
\end{equation}
\end{lemma}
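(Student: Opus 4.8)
The plan is to reduce the definition of internal stability to a statement about the spectrum of a single autonomous state matrix. First I would impose the disturbance-free condition $\delta_x[t]=\delta_y[t]=\delta_u[t]=0$ from the definition and eliminate the algebraic variables $y[t]$ and $u[t]$ from the coupled equations \cref{eq:LTI} and \cref{eq:state_space_controller}. With zero measurement disturbance the output equation gives $y[t]=Cx[t]$, and the controller output becomes $u[t]=C_\mK\xi[t]+D_\mK C x[t]$. Substituting this expression for $u[t]$ into the plant state update then removes all algebraic signals and leaves a purely recursive description of the pair $(x[t],\xi[t])$.

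Carrying out that substitution yields the augmented autonomous system
\[
\begin{bmatrix} x[t+1] \\ \xi[t+1] \end{bmatrix} = A_{\mathrm{cl}} \begin{bmatrix} x[t] \\ \xi[t] \end{bmatrix},
\]
with $A_{\mathrm{cl}}$ exactly as in \cref{eq:closed_loop_matrix}: the $(1,1)$ block $A+BD_\mK C$ arises from routing the direct feedthrough term $D_\mK y=D_\mK C x$ through $B$, the off-diagonal blocks $BC_\mK$ and $B_\mK C$ capture the cross-coupling between plant and controller states, and the $(2,2)$ block is the controller's own dynamics $A_\mK$. The only genuine bookkeeping is ensuring the $D_\mK$ feedthrough lands in the $(1,1)$ block; because the plant is strictly proper there is no plant-side feedthrough, so no algebraic loop forms and the interconnection is well-posed, as already observed in the excerpt. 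This disposes of the well-posedness clause in the definition.

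It then remains to connect Schur stability of $A_{\mathrm{cl}}$ to the convergence requirement. Writing $w[t]=(x[t],\xi[t])$, the constant-coefficient recursion has the explicit solution $w[t]=A_{\mathrm{cl}}^t\, w[0]$. For the ($\Leftarrow$) direction, if $A_{\mathrm{cl}}$ is stable then $A_{\mathrm{cl}}^t\to 0$ and hence $w[t]\to 0$ for every initial condition, which is precisely internal stability. For the ($\Rightarrow$) direction, letting $w[0]$ range over the standard basis vectors forces each column of $A_{\mathrm{cl}}^t$ to vanish in the limit, so $A_{\mathrm{cl}}^t\to 0$; for a constant matrix this is equivalent to all eigenvalues lying strictly inside the unit disk, i.e., $A_{\mathrm{cl}}$ being stable. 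I expect no real obstacle here: the entire content of the lemma is the clean reduction to the single matrix $A_{\mathrm{cl}}$, and the only points demanding care are the block algebra of the substitution and the appeal to strict properness to discharge well-posedness.
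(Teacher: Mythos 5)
Your proposal is correct and is exactly the standard derivation behind the cited result: the paper itself gives no proof of \Cref{lemma:internalstability}, simply quoting it from \cite[Lemma 5.2]{zhou1996robust}, and your elimination of $y$ and $u$ to obtain the autonomous recursion $w[t+1]=A_{\mathrm{cl}}w[t]$, together with the equivalence of $A_{\mathrm{cl}}^t\to 0$ for all initial conditions and Schur stability, is precisely that textbook argument (including the correct appeal to strict properness for well-posedness). No gaps.
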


\vspace{1mm}
The condition in \Cref{lemma:internalstability} is non-convex in $A_\mK$, $B_\mK$, $C_\mK$, $D_\mK$. It is known that if $q = n$, we can derive a convex linear matrix inequality (LMI) to characterize  $A_\mK$, $B_\mK$, $C_\mK$, $D_\mK$ by a change of variables based on Lyapunov theory~\cite{gahinet1994linear,scherer1997multiobjective,zheng2021analysis}. %; also see recent analysis on the geometry of the parameter space $A_\mK,B_\mK,C_\mK,D_\mK$~\cite{}. 

\subsection{Doubly-coprime factorization and Youla parameterization}

In addition to the state-space condition~\cref{eq:closed_loop_matrix}, there are frequency-domain characterizations for $\mathcal{C}_{\text{stab}}$, which only impose convex constraints on certain transfer functions. 
A classical approach is the celebrated \emph{Youla parameterization}~\cite{youla1976modern}, and two recent approaches are SLP~\cite{wang2019system} and IOP~\cite{furieri2019input}. As expected, Youla parameterization,  SLP,~and IOP are~equivalent~\cite{zheng2019equivalence}; see more discussions in~\cite{zheng2019systemlevel,tseng2021realization}. 
%

%Youla parameterization relies on a doubly-coprime factorization  of the plant.
 \begin{definition} \label{definition:coprime}
        A collection of stable transfer matrices, $\mathbf{U}_l$, $\mathbf{V}_l$, $\mathbf{N}_l,\mathbf{M}_l,\mathbf{U}_r, \mathbf{V}_r,\mathbf{N}_r,\mathbf{M}_r \in \mathcal{RH_{\infty}}$ is called a doubly-coprime factorization of $\mathbf{G}$ if
        $
            \mathbf{G} = \mathbf{N}_r\mathbf{M}_r^{-1} = \mathbf{M}_l^{-1}\mathbf{N}_l
        $
        and
        \vspace{-2mm}
        \begin{equation} \label{eq:coprime}
            \begin{bmatrix} \mathbf{U}_l & -\mathbf{V}_l \\ -\mathbf{N}_l & \mathbf{M}_l\end{bmatrix}
            \begin{bmatrix} \mathbf{M}_r & \mathbf{V}_r \\ \mathbf{N}_r & \mathbf{U}_r\end{bmatrix} = I.
        \end{equation}
    \end{definition}
\vspace{1mm}

    Such a doubly-coprime factorization can always be computed efficiently under \Cref{assumption:stabilizability} (see Appendix~A)~\cite{nett1984connection}. The Youla parameterization presents the equivalence~\cite{youla1976modern}
    \begin{equation} \label{eq:youla}
       % \begin{aligned}
           {\small \mathcal{C}_{\text{stab}} \!=\! \left\{\mathbf{K} \!= \!(\mathbf{V}_r \!-\! \mathbf{M}_r\mathbf{Q})(\mathbf{U}_r \!-\! \mathbf{N}_r\mathbf{Q})^{-1} \!\mid  \! \mathbf{Q} \!\in \!\mathcal{RH}_{\infty} \right\}}%\footnote{Equivalently, $\mathcal{C}_{\text{stab}}=\{(\mathbf{U}_l-\mathbf{QN}_l)^{-1}(\mathbf{V}_l-\mathbf{QM}_l)|~\mathbf{Q} \in \mathcal{RH}_\infty\}$.},
    %    \end{aligned}
    \end{equation}
    where $\mathbf{Q}$ is called the \emph{Youla parameter}. 
    The $\mathcal{RH}_{\infty}$ constraint on the Youla parameter $\mathbf{Q}$ is convex, but the order of the controller $\mathbf{K}$ cannot be specified a priori in the present form~\cref{eq:youla}.  
 The SLP~\cite{wang2019system} and the IOP~\cite{furieri2019input} require no doubly-coprime factorization, but impose a set of convex affine constraints on certain closed-loop responses. %In particular, the SLP utilizes the closed-loop responses from $(\bm{\delta}_x,\bm{\delta}_y)$ to $(\mathbf{x},\mathbf{u})$, and the IOP relies on the closed-loop responses from $(\bm{\delta}_y,\bm{\delta}_u)$ to $(\mathbf{y},\mathbf{u})$. 
 
Thanks to the convexity in the Youla, SLP, and IOP, they have found applications in distributed and robust control~\cite{zhou1996robust,rotkowitz2005characterization,furieri2019sparsity}, and recently in sample complexity analysis of learning problems~\cite{zheng2021sample,dean2020sample,zhang2021sample}.  However, the constraints on Youla, SLP, and IOP are infinitely dimensional in frequency domain, and they do not admit immediately efficient computation.  
The Ritz approximation was discussed in~\cite[Chapter 15]{boyd1991linear}, and the FIR approximation was used extensively in~\cite{wang2019system,furieri2019input,zheng2021sample,dean2020sample}. However, the Ritz or FIR approximation not only leads to large-scale~optimization problems, but also results in controllers of high-order (often much larger than the state dimension $n$); see~\cite[Section 5]{zheng2019systemlevel} for more discussions. 

\subsection{Problem statement}

The computational issue for frequency-domain characterizations of $\mathcal{C}_{\mathrm{stab}}$ has been addressed unsatisfactorily in the classical literature~\cite[Chapter 15]{boyd1991linear} or the recent studies~\cite{wang2019system,furieri2019input,zheng2021sample,dean2020sample}. This motivates the main question in this paper.

%\vspace{-1mm}
\begin{center}
    \textit{Can we develop an efficient linear matrix inequality (LMI) for a frequency-domain characterization of $\mathcal{C}_{\mathrm{stab}}$?}
\end{center}
%\vspace{-1mm}

%The main objective of this paper is to introduce an efficient LMI-based computation for a frequency-domain characterization of $\mathcal{C}_{\mathrm{stab}}$. 
We provide a positive answaer to this question. In particular, we first introduce a ``kernel'' version of the Youla parameterization~\cref{eq:youla}, which only involves one single affine constraint. This leads to a new robust filtering problem, allowing us to derive an LMI for efficient computation.

\section{Parameterization with a Single Affine Constraint and Robust filtering} \label{section:parameterization}

%In this section, we introduce a convex parameterization of  $\mathcal{C}_{\mathrm{stab}}$ with a single affine constraint and present its connection with a robust filtering problem. 

\subsection{Stabilization lemma}
We first introduce a classical stabilization lemma.
\begin{lemma} \label{lemma:stabilization}
    Given a doubly coprime factorization~\cref{eq:coprime} with $\mathbf{G} \!=\! \mathbf{N}_r \mathbf{M}^{-1}_r \!=\! \mathbf{M}_l^{-1} \mathbf{N}_l$, we have equivalent statements as
    \begin{enumerate}
        \item The controller $\mathbf{K}$ internally stabilizes $\mathbf{G}$;
        \item $\begin{bmatrix} I & -\mathbf{G} \\ - \mathbf{K} & I \end{bmatrix}^{-1} \in \mathcal{RH}_\infty$;
        \item $\begin{bmatrix} \mathbf{M}_l & -\mathbf{N}_l \\ - \mathbf{K} & I \end{bmatrix}^{-1} \in \mathcal{RH}_\infty$;
        \item $\begin{bmatrix} I & -\mathbf{N}_r \\ - \mathbf{K} & \mathbf{M}_r \end{bmatrix}^{-1} \in \mathcal{RH}_\infty$.
    \end{enumerate}
\end{lemma}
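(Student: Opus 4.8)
The plan is to treat statement (2) as a hub: I will show (1)$\Leftrightarrow$(2) from the state-space characterization of internal stability, and then establish (2)$\Leftrightarrow$(3) and (2)$\Leftrightarrow$(4) by purely algebraic manipulations of the coprime factors. Throughout I write $\mathbf{T}_2,\mathbf{T}_3,\mathbf{T}_4$ for the three block matrices appearing in statements (2), (3), (4), so that each statement asserts $\mathbf{T}_i^{-1}\in\mathcal{RH}_\infty$.

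For (1)$\Leftrightarrow$(2), I would observe that $\mathbf{T}_2^{-1}$ is exactly the closed-loop transfer matrix from the injected disturbances $(\bm{\delta}_y,\bm{\delta}_u)$ to the internal signals $(\mathbf{y},\mathbf{u})$ in \Cref{Fig:LTI}. A state-space realization of this map has $A_{\mathrm{cl}}$ from \cref{eq:closed_loop_matrix} as its state matrix, and under \Cref{assumption:stabilizability} this realization carries no unstable uncontrollable or unobservable modes. Hence $\mathbf{T}_2^{-1}\in\mathcal{RH}_\infty$ if and only if $A_{\mathrm{cl}}$ is stable, which by \Cref{lemma:internalstability} is exactly internal stabilization. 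This is the standard equivalence between internal stability and $\mathcal{RH}_\infty$-membership of the four-block closed-loop map.

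Next, for the easy directions of (2)$\Leftrightarrow$(3) and (2)$\Leftrightarrow$(4), I would use $\mathbf{G}=\mathbf{M}_l^{-1}\mathbf{N}_l=\mathbf{N}_r\mathbf{M}_r^{-1}$ to write the two factorizations
\[\begin{bmatrix} I & -\mathbf{G} \\ -\mathbf{K} & I\end{bmatrix} = \begin{bmatrix} \mathbf{M}_l^{-1} & 0 \\ 0 & I\end{bmatrix}\begin{bmatrix} \mathbf{M}_l & -\mathbf{N}_l \\ -\mathbf{K} & I\end{bmatrix}, \qquad \begin{bmatrix} I & -\mathbf{G} \\ -\mathbf{K} & I\end{bmatrix} = \begin{bmatrix} I & -\mathbf{N}_r \\ -\mathbf{K} & \mathbf{M}_r\end{bmatrix}\begin{bmatrix} I & 0 \\ 0 & \mathbf{M}_r^{-1}\end{bmatrix},\]
which give $\mathbf{T}_2^{-1}=\mathbf{T}_3^{-1}\,\mathrm{diag}(\mathbf{M}_l,I)=\mathrm{diag}(I,\mathbf{M}_r)\,\mathbf{T}_4^{-1}$. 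Since $\mathbf{M}_l,\mathbf{M}_r\in\mathcal{RH}_\infty$ by \Cref{definition:coprime}, a stable $\mathbf{T}_3^{-1}$ or $\mathbf{T}_4^{-1}$ immediately yields a stable $\mathbf{T}_2^{-1}$, establishing (3)$\Rightarrow$(2) and (4)$\Rightarrow$(2).

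The hard part is the reverse implication (2)$\Rightarrow$(3) (and symmetrically (2)$\Rightarrow$(4)), because recovering $\mathbf{T}_3^{-1}=\mathbf{T}_2^{-1}\,\mathrm{diag}(\mathbf{M}_l^{-1},I)$ reintroduces the \emph{unstable} factor $\mathbf{M}_l^{-1}$, so stability of $\mathbf{M}_l$ alone is not enough and the coprimeness in \cref{eq:coprime} must enter. The key step is to read off the Bezout identity $\mathbf{M}_l\mathbf{U}_r-\mathbf{N}_l\mathbf{V}_r=I$ from the $(2,2)$ block of \cref{eq:coprime} and rewrite $\mathbf{M}_l^{-1}=\mathbf{U}_r-\mathbf{G}\mathbf{V}_r$ with $\mathbf{U}_r,\mathbf{V}_r\in\mathcal{RH}_\infty$. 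Substituting this into the block column of $\mathbf{T}_3^{-1}$ that carries $\mathbf{M}_l^{-1}$, every entry becomes a finite sum of products of the transfer matrices $(I-\mathbf{G}\mathbf{K})^{-1}$, $(I-\mathbf{G}\mathbf{K})^{-1}\mathbf{G}$, $\mathbf{K}(I-\mathbf{G}\mathbf{K})^{-1}$, and $\mathbf{K}(I-\mathbf{G}\mathbf{K})^{-1}\mathbf{G}=(I-\mathbf{K}\mathbf{G})^{-1}-I$ — all of which are the stable entries of $\mathbf{T}_2^{-1}$ under (2) — with the stable factors $\mathbf{U}_r,\mathbf{V}_r$, hence each is itself in $\mathcal{RH}_\infty$. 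The symmetric identity $\mathbf{M}_r^{-1}=\mathbf{U}_l-\mathbf{V}_l\mathbf{G}$ from the $(1,1)$ block closes (2)$\Rightarrow$(4). I expect this Bezout-substitution step to be the crux: it is the only place where coprimeness, rather than mere stability of $\mathbf{M}_l$ and $\mathbf{M}_r$, is essential, precisely because it is what algebraically cancels the unstable dynamics of $\mathbf{M}_l^{-1}$ against the stable closed loop.
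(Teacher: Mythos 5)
Your proof is correct and follows essentially the same route as the paper: (1)$\Leftrightarrow$(2) via the closed-loop map from $(\bm{\delta}_y,\bm{\delta}_u)$ to $(\mathbf{y},\mathbf{u})$, and the two diagonal factorizations $\mathbf{T}_2^{-1}=\mathbf{T}_3^{-1}\,\mathrm{diag}(\mathbf{M}_l,I)=\mathrm{diag}(I,\mathbf{M}_r)\,\mathbf{T}_4^{-1}$ for (3)$\Rightarrow$(2) and (4)$\Rightarrow$(2). The only difference is that the paper dismisses the reverse implications with ``the other directions are not difficult using properties of the coprime factorization,'' whereas you actually carry them out via the Bezout identities $\mathbf{M}_l\mathbf{U}_r-\mathbf{N}_l\mathbf{V}_r=I$ and $\mathbf{U}_l\mathbf{M}_r-\mathbf{V}_l\mathbf{N}_r=I$ read off from \cref{eq:coprime}; that substitution is valid and supplies exactly the detail the paper omits.
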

\vspace{2mm}
This result is standard~\cite[Chapter 4]{francis1987course}. 
A quick understanding might be: a classical result~\cite[Lemma 5.3]{zhou1996robust} says that $\mathbf{K}$ internally stabilizes $\mathbf{G}$ if and only if the closed-loop responses from $(\mathbf{\delta}_y,\mathbf{\delta}_u)$ to $(\mathbf{y},\mathbf{u})$ in \Cref{Fig:LTI} are stable. Simple algebra leads to 
\begin{equation} \label{eq:yutoyu}
    \begin{bmatrix} \mathbf{y} \\\mathbf{u} \end{bmatrix} = \begin{bmatrix} I & -\mathbf{G} \\ - \mathbf{K} & I \end{bmatrix}^{-1}  \begin{bmatrix} \mathbf{\delta_y} \\ \mathbf{\delta_u} \end{bmatrix}.
\end{equation}
This proves the equivalence between (1) and (2). Since
$$
\begin{aligned}
    \begin{bmatrix} I & -\mathbf{G} \\ - \mathbf{K} & I \end{bmatrix}^{-1} &=  \begin{bmatrix} \mathbf{M}_l & -\mathbf{N}_l  \\ - \mathbf{K} & I \end{bmatrix}^{-1} \begin{bmatrix} \mathbf{M}_l & 0  \\ 0  & I \end{bmatrix}  \\
    & = \begin{bmatrix} I & 0  \\ 0  & \mathbf{M}_r \end{bmatrix} \begin{bmatrix} I & -\mathbf{N}_r \\ - \mathbf{K} & \mathbf{M}_r \end{bmatrix}^{-1},
\end{aligned}
$$
this proves (3) $\Rightarrow$ (2), and (4) $\Rightarrow$ (2). The other directions are not difficult using properties of the coprime factorization~\cref{eq:coprime}.

\subsection{Convex parameterization of stabilizing controllers}
Our first result is the following convex parameterization of all stabilizing controllers, which can be considered as a ``kernel'' version of the Youla parameterization. 

\begin{theorem} \label{theorem:parameterization}
    Given a coprime factorization~\cref{eq:coprime} with  $\mathbf{G} = \mathbf{M}_l^{-1} \mathbf{N}_l$, we have an equivalent representation of $\mathcal{C}_{\text{stab}}$ as
    \begin{align} %\label{eq:single-affine-constraint}
       {\small \mathcal{C}_{\mathrm{stab}} \!=\! \{\mathbf{K} \!= \!\mathbf{Y}\mathbf{X}^{-1} \!\mid \! \mathbf{M}_l \mathbf{X} \!-\!\mathbf{N}_l \mathbf{Y}\! = \!I, \mathbf{X},\! \mathbf{Y} \!\in \!\mathcal{RH}_\infty \}. \label{eq:para_left}%\\
       % \mathcal{C}_{\text{stab}} &= \{\mathbf{K} = \mathbf{X}^{-1}\mathbf{Y} \mid  \mathbf{X}\mathbf{M}_r -\mathbf{Y} \mathbf{N}_r  = I,  \mathbf{X}, \mathbf{Y} \in \mathcal{RH}_\infty \}.   \label{eq:para_right}
       }
    \end{align}
\end{theorem}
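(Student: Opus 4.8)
The plan is to prove the set equality in \cref{eq:para_left} by establishing both inclusions, relying on the third characterization in \Cref{lemma:stabilization}: $\mathbf{K}$ internally stabilizes $\mathbf{G}$ if and only if $\mathbf{T}^{-1} \in \mathcal{RH}_\infty$, where I abbreviate $\mathbf{T} := \begin{bmatrix} \mathbf{M}_l & -\mathbf{N}_l \\ -\mathbf{K} & I\end{bmatrix}$. The entire argument hinges on reading the pair $(\mathbf{X},\mathbf{Y})$ off the block entries of $\mathbf{T}^{-1}$, so the single affine constraint emerges as one block row of the identity $\mathbf{T}\,\mathbf{T}^{-1}=I$.

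For the inclusion $\mathcal{C}_{\mathrm{stab}} \subseteq \{\,\cdots\}$, I would start from a stabilizing $\mathbf{K}$, so that $\mathbf{T}^{-1}\in\mathcal{RH}_\infty$, and define $(\mathbf{X},\mathbf{Y})$ to be the first block column of $\mathbf{T}^{-1}$; these inherit membership in $\mathcal{RH}_\infty$. Extracting the first block column of $\mathbf{T}\,\mathbf{T}^{-1}=I$ yields exactly $\mathbf{M}_l\mathbf{X}-\mathbf{N}_l\mathbf{Y}=I$ together with $\mathbf{Y}=\mathbf{K}\mathbf{X}$. Combining the two gives $(\mathbf{M}_l-\mathbf{N}_l\mathbf{K})\mathbf{X}=I$, which forces $\mathbf{X}$ to be invertible (a one-sided inverse of a square rational matrix is two-sided), whence $\mathbf{K}=\mathbf{Y}\mathbf{X}^{-1}$, placing $\mathbf{K}$ in the right-hand set.

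For the reverse inclusion, I would take any admissible pair $(\mathbf{X},\mathbf{Y})\in\mathcal{RH}_\infty$ with $\mathbf{M}_l\mathbf{X}-\mathbf{N}_l\mathbf{Y}=I$ and $\mathbf{K}=\mathbf{Y}\mathbf{X}^{-1}$ (so $\mathbf{Y}=\mathbf{K}\mathbf{X}$), and then exhibit the inverse explicitly. Solving $\mathbf{T}\begin{bmatrix}\mathbf{X}'\\\mathbf{Y}'\end{bmatrix}=\begin{bmatrix}0\\I\end{bmatrix}$ for the missing block column suggests the candidate
$$\mathbf{T}^{-1}=\begin{bmatrix}\mathbf{X} & \mathbf{X}\mathbf{N}_l \\ \mathbf{Y} & I+\mathbf{Y}\mathbf{N}_l\end{bmatrix},$$
which lies in $\mathcal{RH}_\infty$ since it is assembled from sums and products of $\mathcal{RH}_\infty$ matrices. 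A direct multiplication $\mathbf{T}\,\mathbf{T}^{-1}=I$, using only $\mathbf{K}\mathbf{X}=\mathbf{Y}$ and the affine constraint, confirms that this is the genuine inverse; \Cref{lemma:stabilization} then certifies $\mathbf{K}\in\mathcal{C}_{\mathrm{stab}}$.

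I expect the reverse direction to be the main obstacle: from the single constraint and the factored form $\mathbf{K}=\mathbf{Y}\mathbf{X}^{-1}$ one must reconstruct the \emph{entire} $(p+m)\times(p+m)$ inverse and verify it is stable, and the nonobvious step is guessing that the second block column is $(\mathbf{X}\mathbf{N}_l,\,I+\mathbf{Y}\mathbf{N}_l)$. A secondary point deserving care is well-posedness and properness of $\mathbf{K}=\mathbf{Y}\mathbf{X}^{-1}$; because the plant is strictly proper the interconnection is automatically well-posed, so it suffices to track invertibility of $\mathbf{X}$ and stability of the constructed inverse, both of which the computation above supplies.
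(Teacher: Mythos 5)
Your proposal is correct and follows essentially the same route as the paper's proof: both directions rest on \Cref{lemma:stabilization}, with $(\mathbf{X},\mathbf{Y})$ read off the first block column of $\begin{bmatrix} \mathbf{M}_l & -\mathbf{N}_l \\ -\mathbf{K} & I\end{bmatrix}^{-1}$ in one direction, and the explicit candidate inverse $\begin{bmatrix}\mathbf{X} & \mathbf{X}\mathbf{N}_l \\ \mathbf{Y} & I+\mathbf{Y}\mathbf{N}_l\end{bmatrix}$ verified by direct multiplication in the other. Your extra observation that $(\mathbf{M}_l-\mathbf{N}_l\mathbf{K})\mathbf{X}=I$ forces invertibility of $\mathbf{X}$ is a detail the paper leaves implicit.
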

\vspace{2mm}
\begin{proof} $\Leftarrow$ Suppose that there exist $\mathbf{X}, \mathbf{Y} \in \mathcal{RH}_\infty$ satisfying the affine constraint in~\cref{eq:para_left}. We prove that $\mathbf{K} = \mathbf{Y}\mathbf{X}^{-1}$ internally stabilizes $\mathbf{G}$. Indeed, we can verify 
    $$
    \begin{aligned}
        &\begin{bmatrix} \mathbf{M}_l & -\mathbf{N}_l \\ - \mathbf{K} & I \end{bmatrix}\begin{bmatrix} \mathbf{X} & \mathbf{X}\mathbf{N}_l \\ \mathbf{Y} & I + \mathbf{Y}\mathbf{N}_l \end{bmatrix} \\
        =&   \begin{bmatrix} \mathbf{M}_l\mathbf{X} -\mathbf{N}_l\mathbf{Y}   & (\mathbf{M}_l\mathbf{X} -\mathbf{N}_l\mathbf{Y}-I)\mathbf{N}_l \\ - \mathbf{K}\mathbf{X} + \mathbf{Y} & I + (- \mathbf{K}\mathbf{X} + \mathbf{Y})\mathbf{N}_l \end{bmatrix} 
        =  \begin{bmatrix} I & 0 \\ 0 & I \end{bmatrix}\!,
    \end{aligned}
    $$
    which means 
    $$
    \begin{bmatrix} \mathbf{M}_l & -\mathbf{N}_l \\ - \mathbf{K} & I \end{bmatrix}^{-1} = \begin{bmatrix} \mathbf{X} & \mathbf{X}\mathbf{N}_l \\ \mathbf{Y} & I + \mathbf{Y}\mathbf{N}_l \end{bmatrix} \in \mathcal{RH}_\infty.
    $$
    By \Cref{lemma:stabilization}, we know $\mathbf{K} = \mathbf{Y}\mathbf{X}^{-1} \in \mathcal{C}_{\mathrm{stab}}$.
    
    $\Rightarrow$ Given $\mathbf{K}\in \mathcal{C}_{\mathrm{stab}}$, we prove that there exist  $\mathbf{X}, \mathbf{Y} \in \mathcal{RH}_\infty$ satisfying the affine constraint in~\cref{eq:para_left} such that $\mathbf{K} = \mathbf{Y}\mathbf{X}^{-1}$. 
     By \Cref{lemma:stabilization}, we know 
     $$
        \begin{bmatrix} \mathbf{M}_l & -\mathbf{N}_l \\ - \mathbf{K} & I \end{bmatrix}^{-1} \in \mathcal{RH}_\infty.
     $$
     Upon defining 
     $$
         \begin{bmatrix} \mathbf{X} & \star \\ \mathbf{Y} & \star \end{bmatrix} := \begin{bmatrix} \mathbf{M}_l & -\mathbf{N}_l \\ - \mathbf{K} & I \end{bmatrix}^{-1},
     $$
     where $\star$ are not important, we have $\mathbf{X}, \mathbf{Y} \in \mathcal{RH}_\infty$ and 
     $$
        \mathbf{M}_l \mathbf{X} -\mathbf{N}_l \mathbf{Y} = I, \quad -\mathbf{K}\mathbf{X} + \mathbf{Y} = 0.
     $$
     This completes the proof. 
     \end{proof}

%\begin{remark}
Similarly, we can derive an equivalent parameterization using the right coprime factorization $\mathbf{G}= \mathbf{N}_r \mathbf{M}^{-1}_r $: %. Giving a right-coprime factorization , 
$$\mathcal{C}_{\text{stab}} = \{\mathbf{K} = \mathbf{X}^{-1}\mathbf{Y} \mid  \mathbf{X}\mathbf{M}_r -\mathbf{Y} \mathbf{N}_r  = I,  \mathbf{X}, \mathbf{Y} \in \mathcal{RH}_\infty \}.$$ %. %\hfill $\square$
%\end{remark}
%
There exist different internal stability conditions based on the coprime factorization~\cref{eq:coprime}; see e.g.,~\cite[Lemma 5.10 \& Corollary 5.1]{zhou1996robust}. To the best of our knowledge, the explicit characterization with a single affine constraint in~\Cref{theorem:parameterization} has not been formulated before. \Cref{theorem:parameterization}, the Youla~\cite{youla1976modern}, the SLP~\cite{wang2019system} and the IOP~\cite{furieri2019input} are expected to be equivalent among each other in theory. We give some discussions below.    

\begin{remark}[Connection with Youla]
As shown in~\cref{eq:youla}, the classical Youla parameterization only has one parameter $\mathbf{Q} \in \mathcal{RH}_\infty$ with no affine constraints. Indeed, all the solutions to the affine equation
\begin{equation} \label{eq:affine-kernel}
    \begin{bmatrix}\mathbf{M}_l & -\mathbf{N}_l \end{bmatrix} \begin{bmatrix}
\mathbf{X} \\
\mathbf{Y}
\end{bmatrix}  = I, \;\; \mathbf{X}, \mathbf{Y} \in \mathcal{RH}_\infty
\end{equation}
are parameterized by
$
\begin{bmatrix}
\mathbf{X} \\
\mathbf{Y}
\end{bmatrix} = \begin{bmatrix}
\mathbf{X}_r \\
\mathbf{Y}_r 
\end{bmatrix} + \begin{bmatrix}
\mathbf{N}_r \\
\mathbf{M}_r 
\end{bmatrix}\mathbf{Q},  \mathbf{Q} \in \mathcal{RH}_\infty,
$
where $\begin{bmatrix}
\mathbf{X}_r \\
\mathbf{Y}_r 
\end{bmatrix}$ is a special solution to~\cref{eq:affine-kernel} and $\begin{bmatrix}
\mathbf{N}_r \\
\mathbf{M}_r 
\end{bmatrix}$ is the kernel space of $\begin{bmatrix}\mathbf{M}_l & -\mathbf{N}_l \end{bmatrix}$ in $ \mathcal{RH}_\infty$, which is confirmed by the coprime factorization~\cref{eq:coprime}. 
\hfill $\square$
\end{remark}

\begin{remark}[Connection with SLP/IOP]
Both~the~SLP~and IOP utlize certain closed-loop responses to parameterize $\mathcal{C}_{\mathrm{stab}}$. In particular, the IOP~\cite{furieri2019input} relies on the closed-loop responses from $(\bm{\delta}_y,\bm{\delta}_u)$ to $(\mathbf{y},\mathbf{u})$ in~\cref{eq:yutoyu}: all internally stabilizing controllers is parameterized by $\bm{\Phi}_{yy}, \bm{\Phi}_{uy},
            \bm{\Phi}_{yu}, \bm{\Phi}_{uu}$  that lies in the affine subspace defined by 
\begin{equation}\label{eq:iop}
    \begin{aligned}
    \begin{bmatrix} I & -\mathbf{G} \end{bmatrix}\begin{bmatrix}
           \bm{\Phi}_{yy} & \bm{\Phi}_{yu} \\
            \bm{\Phi}_{uy} & \bm{\Phi}_{uu}
        \end{bmatrix} &= \begin{bmatrix} I & 0 \end{bmatrix}, \\
            \begin{bmatrix}
            \bm{\Phi}_{yy} & \bm{\Phi}_{yu} \\
            \bm{\Phi}_{uy} & \bm{\Phi}_{uu}
        \end{bmatrix}\begin{bmatrix}  -\mathbf{G} \\I \end{bmatrix} &= \begin{bmatrix} 0 \\ I\end{bmatrix}, \\
       \bm{\Phi}_{yy}, \bm{\Phi}_{uy},
            \bm{\Phi}_{yu}, \bm{\Phi}_{uu} &\in \mathcal{RH}_{\infty},
        \end{aligned}
\end{equation}
and the controller is given by $\mathbf{K} = \bm{\Phi}_{uy}\bm{\Phi}_{yy}^{-1}$. There are four affine constraints in~\cref{eq:iop}. We can verify that given any $\mathbf{X}, \mathbf{Y}$ satisfying the constraint in~\cref{eq:para_left}, the following choice 
$    \bm{\Phi}_{yy} = \mathbf{X}\mathbf{M}_l$,  $\bm{\Phi}_{uy} = \mathbf{Y}\mathbf{M}_l$, $\bm{\Phi}_{yu} = \mathbf{X}\mathbf{N}_l$,
    $\bm{\Phi}_{uu}=I + \mathbf{Y}\mathbf{N}_l
$ is feasible to~\cref{eq:iop} and parameterizes the same controller.  Similar relationship with the SLP can be derived as well.  \hfill $\square$
\end{remark}

\subsection{A robustness variant and robust $\mathcal{H}_\infty$ filtering}

While \Cref{theorem:parameterization}, Youla~\cite{youla1976modern}, SLP~\cite{wang2019system} and IOP~\cite{furieri2019input} are all equivalent with each other theoretically, they have different computational features. As we will see in \Cref{section:filtering}, the fact that \Cref{theorem:parameterization} has only one affine constraint will be essential for deriving an equivalent efficient LMI condition. %
Indeed, the single affine equality in~\cref{eq:para_left} does not need to be satisfied exactly for internal stability. %We have the following robustness lemma.
\begin{lemma}[Robustness lemma] \label{lemma:robustness}
    Given a coprime factorization~\cref{eq:coprime} with  $\mathbf{G} = \mathbf{M}_l^{-1} \mathbf{N}_l$, suppose that
\begin{equation} \label{eq:uncertainty}
    \mathbf{M}_l \mathbf{X} -\mathbf{N}_l \mathbf{Y} = I + \mathbf{\Delta},  \qquad \mathbf{X}, \mathbf{Y} \in \mathcal{RH}_\infty. 
\end{equation} If $(I + \mathbf{\Delta})^{-1} \in \mathcal{RH}_\infty$, then $\mathbf{K} = \mathbf{Y}\mathbf{X}^{-1} \in \mathcal{C}_{\mathrm{stab}}$. 
\end{lemma}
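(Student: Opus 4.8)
The plan is to reduce the perturbed identity~\cref{eq:uncertainty} to the exact affine constraint appearing in~\cref{eq:para_left}, by rescaling the pair $(\mathbf{X},\mathbf{Y})$, and then to invoke \Cref{theorem:parameterization} directly. First I would note that $\mathbf{\Delta} = \mathbf{M}_l\mathbf{X} - \mathbf{N}_l\mathbf{Y} - I$ automatically lies in $\mathcal{RH}_\infty$, since $\mathbf{M}_l,\mathbf{N}_l,\mathbf{X},\mathbf{Y}\in\mathcal{RH}_\infty$ and $\mathcal{RH}_\infty$ is closed under addition and multiplication. Hence $I+\mathbf{\Delta}\in\mathcal{RH}_\infty$, and the hypothesis $(I+\mathbf{\Delta})^{-1}\in\mathcal{RH}_\infty$ simply says that $I+\mathbf{\Delta}$ is a unit of the ring $\mathcal{RH}_\infty$.

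Next I would introduce the rescaled variables
$$\tilde{\mathbf{X}} := \mathbf{X}(I+\mathbf{\Delta})^{-1}, \qquad \tilde{\mathbf{Y}} := \mathbf{Y}(I+\mathbf{\Delta})^{-1},$$
which both lie in $\mathcal{RH}_\infty$ as products of $\mathcal{RH}_\infty$ elements. The exact affine identity then follows by a one-line computation,
$$\mathbf{M}_l\tilde{\mathbf{X}} - \mathbf{N}_l\tilde{\mathbf{Y}} = (\mathbf{M}_l\mathbf{X} - \mathbf{N}_l\mathbf{Y})(I+\mathbf{\Delta})^{-1} = (I+\mathbf{\Delta})(I+\mathbf{\Delta})^{-1} = I,$$
so that $(\tilde{\mathbf{X}},\tilde{\mathbf{Y}})$ is feasible for the constraint in~\cref{eq:para_left}.

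It remains to check that the rescaling does not alter the controller. Since $(I+\mathbf{\Delta})^{-1}$ is invertible, $\mathbf{X}$ is invertible precisely when $\tilde{\mathbf{X}}$ is, and
$$\tilde{\mathbf{Y}}\tilde{\mathbf{X}}^{-1} = \mathbf{Y}(I+\mathbf{\Delta})^{-1}(I+\mathbf{\Delta})\mathbf{X}^{-1} = \mathbf{Y}\mathbf{X}^{-1} = \mathbf{K}.$$
Applying \Cref{theorem:parameterization} to the pair $(\tilde{\mathbf{X}},\tilde{\mathbf{Y}})$, which satisfies the exact affine constraint, then gives $\mathbf{K} = \tilde{\mathbf{Y}}\tilde{\mathbf{X}}^{-1}\in\mathcal{C}_{\mathrm{stab}}$, as desired.

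I expect the only genuine decision, rather than routine bookkeeping, to be the choice of side: one must right-multiply by $(I+\mathbf{\Delta})^{-1}$ rather than left-multiply, so that the factor $(I+\mathbf{\Delta})$ cancels cleanly against $\mathbf{\Delta}$ in the affine identity \emph{and} the quotient $\mathbf{Y}\mathbf{X}^{-1}$ is preserved. Left-multiplication would spoil one of these two properties. Everything else reduces to closure of $\mathcal{RH}_\infty$ under products and to a direct appeal to \Cref{theorem:parameterization}, so no further obstacle is anticipated.
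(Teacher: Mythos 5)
Your proposal is correct. It differs in structure from the paper's proof: the paper argues directly, computing the inverse
\begin{equation*}
\begin{bmatrix} \mathbf{M}_l & -\mathbf{N}_l \\ - \mathbf{K} & I \end{bmatrix}^{-1}
= \begin{bmatrix} \mathbf{X}(I + \mathbf{\Delta})^{-1}  & \mathbf{X}(I + \mathbf{\Delta})^{-1}\mathbf{N}_l \\  \mathbf{Y}(I + \mathbf{\Delta})^{-1} & I + \mathbf{Y}(I + \mathbf{\Delta})^{-1}\mathbf{N}_l\end{bmatrix}
\end{equation*}
and invoking \Cref{lemma:stabilization} to conclude stability of the closed loop, whereas you reduce to \Cref{theorem:parameterization} by exhibiting the rescaled pair $(\tilde{\mathbf{X}},\tilde{\mathbf{Y}})=(\mathbf{X}(I+\mathbf{\Delta})^{-1},\,\mathbf{Y}(I+\mathbf{\Delta})^{-1})$ as an exact solution of~\cref{eq:para_left} representing the same controller. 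The underlying algebra is identical --- your rescaled variables are precisely the first-column blocks of the paper's inverse --- but your packaging is more modular and makes transparent the observation that the perturbed solution set is just a right unit multiple of the exact one, with the only nontrivial choice being the side on which to multiply, as you note. What the paper's more hands-on route buys is the explicit closed-loop response in terms of $(I+\mathbf{\Delta})^{-1}$, which it reuses immediately in \Cref{theorem:internal_stability_hinf} to derive the quantitative error bound~\cref{eq:error_bound}; your argument establishes membership in $\mathcal{C}_{\mathrm{stab}}$ but would need that extra computation anyway to get the bound.
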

\begin{proof}
Let $\mathbf{K} \!= \!\mathbf{Y}\mathbf{X}^{-1}$ with $\mathbf{X}$ and $\mathbf{Y}$ in~\cref{eq:uncertainty}. We~have
    \begingroup
    \setlength\arraycolsep{1.5pt}
\def\arraystretch{0.9}
\small
$$
\begin{aligned}
\begin{bmatrix} \mathbf{M}_l & -\mathbf{N}_l \\ - \mathbf{K} & I \end{bmatrix}^{-1} 
\!\!=\! &\begin{bmatrix} (\mathbf{M}_l \!-\!\mathbf{N}_l\mathbf{K})^{-1}  & (\mathbf{M}_l -\mathbf{N}_l\mathbf{K})^{-1}\mathbf{N}_l \\  \mathbf{K}(\mathbf{M}_l \!-\!\mathbf{N}_l\mathbf{K})^{-1} & I \!+\! \mathbf{K}(\mathbf{M}_l \!-\!\mathbf{N}_l\mathbf{K})^{-1}\mathbf{N}_l\end{bmatrix} \\
=\!& \begin{bmatrix} \mathbf{X}(I + \mathbf{\Delta})^{-1}  & \mathbf{X}(I + \mathbf{\Delta})^{-1}\mathbf{N}_l \\  \mathbf{Y}(I + \mathbf{\Delta})^{-1} & I \!+\! \mathbf{Y}(I + \mathbf{\Delta})^{-1}\mathbf{N}_l\end{bmatrix},
\end{aligned}
$$
\endgroup
which is stable if $(I + \mathbf{\Delta})^{-1} \in \mathcal{RH}_\infty$. Combining this fact with \cref{lemma:stabilization}, we complete the proof. 
\end{proof}

\begin{remark}
The condition $(I + \mathbf{\Delta})^{-1} \in \mathcal{RH}_\infty$ is only sufficient for internal stability. Consider a simple plant 
$$\mathbf{G} = \frac{1}{z+1}, \; \mathbf{M}_l = \frac{z+1}{z}, \; \mathbf{N}_l = \frac{1}{z}.$$
We let 
$
    \mathbf{X} = 1, \mathbf{Y} = 1 \in \mathcal{RH}_\infty$ that satisfy $ \mathbf{M}_l \mathbf{X} -\mathbf{N}_l \mathbf{Y} = 1.
$
Thus, $\mathbf{K} = \mathbf{Y}\mathbf{X}^{-1} = 1$ internally stabilizes $\mathbf{G}$. Consider 
$$
     \mathbf{X}_1 \!=\! \frac{z+2}{z}, \mathbf{Y}_1 \!=\!  \frac{z+2}{z} \in \mathcal{RH}_\infty, \mathbf{M}_l \mathbf{X}_1 \! -\!\mathbf{N}_l \mathbf{Y}_1 = 1 \!+\! \frac{2}{z}.
$$
Controller $\mathbf{K} = \mathbf{Y}_1\mathbf{X}_1^{-1} = 1$ internally stabilizes $\mathbf{G}$, but
$
    (I + \mathbf{\Delta})^{-1} = \displaystyle \frac{z}{z+2}
$
is unstable. Thus, $(I + \mathbf{\Delta})^{-1} \in \mathcal{RH}_\infty$ is not necessary for internal stability \hfill $\square$
%
%A sufficient condition for $(I + \mathbf{\Delta})^{-1} \in \mathcal{RH}_\infty$ is $\|\Delta\|_\infty < 1$.
\end{remark}

From~\Cref{lemma:robustness}, we are ready to introduce our second result that can be interpreted as a robust filtering problem.
\begin{theorem} \label{theorem:internal_stability_hinf}
     Given a coprime factorization~\cref{eq:coprime} with  $\mathbf{G} = \mathbf{M}_l^{-1} \mathbf{N}_l$, the controller $\mathbf{K}$ internally stabilizes $\mathbf{G}$ if and only if there exist $\mathbf{X}$ and $\mathbf{Y}$ in $\mathcal{RH}_\infty$ such that 
     \begin{equation} \label{eq:affine_residual}
         \|\mathbf{M}_l \mathbf{X} -\mathbf{N}_l \mathbf{Y} - I \|_\infty < \epsilon < 1. 
     \end{equation}
     If~\cref{eq:affine_residual} holds, then $\mathbf{K} = \mathbf{Y}\mathbf{X}^{-1}$ is an internally stabilizing controller and the closed-loop response satisfies
     \begin{equation} \label{eq:error_bound}
     \begin{aligned}
         &\left\|\begin{bmatrix} I & -\mathbf{G} \\ -\mathbf{K} & I \end{bmatrix}^{-1} - \begin{bmatrix} \mathbf{X}\mathbf{M}_l & \mathbf{X}\mathbf{N}_l \\ \mathbf{Y}\mathbf{M}_l & I + \mathbf{Y}\mathbf{N}_l \end{bmatrix} \right\|_\infty \\
         \leq \; &\frac{\epsilon}{1-\epsilon} \left\|\begin{bmatrix} \mathbf{X} \\ \mathbf{Y} \end{bmatrix}\right\|_\infty \left\|\begin{bmatrix} \mathbf{M}_l & \mathbf{N}_l \end{bmatrix}\right\|_\infty.
          \end{aligned}
     \end{equation}
     \vspace{2mm}
%     where $\epsilon \in (0,1)$ is such that $\|\mathbf{M}_l \mathbf{X} -\mathbf{N}_l \mathbf{Y} - I \|_\infty < \epsilon$. 
\end{theorem}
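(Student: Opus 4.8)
The plan is to split the claim into three pieces: necessity ($\Rightarrow$), sufficiency ($\Leftarrow$), and the quantitative estimate~\cref{eq:error_bound}. Necessity is essentially immediate from \Cref{theorem:parameterization}: if $\mathbf{K}\in\mathcal{C}_{\mathrm{stab}}$, that theorem already produces $\mathbf{X},\mathbf{Y}\in\mathcal{RH}_\infty$ with $\mathbf{K}=\mathbf{Y}\mathbf{X}^{-1}$ and $\mathbf{M}_l\mathbf{X}-\mathbf{N}_l\mathbf{Y}=I$, so the residual in~\cref{eq:affine_residual} is exactly zero, hence strictly below any chosen $\epsilon\in(0,1)$.

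For sufficiency I would introduce $\mathbf{\Delta}:=\mathbf{M}_l\mathbf{X}-\mathbf{N}_l\mathbf{Y}-I$, so that~\cref{eq:affine_residual} reads $\|\mathbf{\Delta}\|_\infty<\epsilon<1$. Since $\mathbf{\Delta}\in\mathcal{RH}_\infty$ has $\mathcal{H}_\infty$-norm strictly less than one, the Neumann series $\sum_{k\ge0}(-\mathbf{\Delta})^k$ converges in $\mathcal{RH}_\infty$ and yields $(I+\mathbf{\Delta})^{-1}\in\mathcal{RH}_\infty$, together with the standard bound $\|(I+\mathbf{\Delta})^{-1}\|_\infty\le 1/(1-\|\mathbf{\Delta}\|_\infty)$. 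With $(I+\mathbf{\Delta})^{-1}\in\mathcal{RH}_\infty$ established, \Cref{lemma:robustness} directly gives $\mathbf{K}=\mathbf{Y}\mathbf{X}^{-1}\in\mathcal{C}_{\mathrm{stab}}$.

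For the error bound, the key step is to express the true closed-loop map in closed form and compare it with the proposed approximant. Combining the explicit inverse computed inside the proof of \Cref{lemma:robustness} with the factorization identity $\begin{bmatrix} I & -\mathbf{G} \\ -\mathbf{K} & I\end{bmatrix}^{-1}=\begin{bmatrix}\mathbf{M}_l & -\mathbf{N}_l \\ -\mathbf{K} & I\end{bmatrix}^{-1}\begin{bmatrix}\mathbf{M}_l & 0 \\ 0 & I\end{bmatrix}$ recorded in the discussion after \Cref{lemma:stabilization}, I obtain
$$\begin{bmatrix} I & -\mathbf{G} \\ -\mathbf{K} & I\end{bmatrix}^{-1}=\begin{bmatrix}\mathbf{X}(I+\mathbf{\Delta})^{-1}\mathbf{M}_l & \mathbf{X}(I+\mathbf{\Delta})^{-1}\mathbf{N}_l \\ \mathbf{Y}(I+\mathbf{\Delta})^{-1}\mathbf{M}_l & I+\mathbf{Y}(I+\mathbf{\Delta})^{-1}\mathbf{N}_l\end{bmatrix}.$$
Subtracting the approximant $\begin{bmatrix}\mathbf{X}\mathbf{M}_l & \mathbf{X}\mathbf{N}_l \\ \mathbf{Y}\mathbf{M}_l & I+\mathbf{Y}\mathbf{N}_l\end{bmatrix}$ and collecting the common factor $(I+\mathbf{\Delta})^{-1}-I$, every block of the difference carries this factor, so the difference collapses to the structured product $\begin{bmatrix}\mathbf{X}\\\mathbf{Y}\end{bmatrix}\bigl[(I+\mathbf{\Delta})^{-1}-I\bigr]\begin{bmatrix}\mathbf{M}_l & \mathbf{N}_l\end{bmatrix}$.

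To finish, I would bound $\|(I+\mathbf{\Delta})^{-1}-I\|_\infty$ via the identity $(I+\mathbf{\Delta})^{-1}-I=-(I+\mathbf{\Delta})^{-1}\mathbf{\Delta}$ and the Neumann bound, obtaining $\|(I+\mathbf{\Delta})^{-1}-I\|_\infty\le\|\mathbf{\Delta}\|_\infty/(1-\|\mathbf{\Delta}\|_\infty)<\epsilon/(1-\epsilon)$, where the last inequality uses that $t\mapsto t/(1-t)$ is increasing on $(0,1)$. Submultiplicativity of the $\mathcal{H}_\infty$ norm applied to the structured product then yields~\cref{eq:error_bound} exactly. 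I expect the main obstacle to be the bookkeeping in assembling the closed-form true closed-loop map and recognizing that every block of the difference shares the single factor $(I+\mathbf{\Delta})^{-1}-I$; once this rank-structured form is visible, the Neumann estimate and submultiplicativity are routine, and the only delicate point is using $\epsilon<1$ to keep both denominators positive in the monotonicity step.
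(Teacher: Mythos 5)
Your proposal is correct and follows essentially the same route as the paper: necessity via \Cref{theorem:parameterization}, sufficiency via the small-gain/Neumann argument feeding \Cref{lemma:robustness}, and the error bound by factoring the block difference as $\begin{bmatrix}\mathbf{X}\\\mathbf{Y}\end{bmatrix}\bigl[(I+\mathbf{\Delta})^{-1}-I\bigr]\begin{bmatrix}\mathbf{M}_l & \mathbf{N}_l\end{bmatrix}$, which coincides with the paper's $-\begin{bmatrix}\mathbf{X}\\\mathbf{Y}\end{bmatrix}\mathbf{\Delta}(I+\mathbf{\Delta})^{-1}\begin{bmatrix}\mathbf{M}_l & \mathbf{N}_l\end{bmatrix}$ since $\mathbf{\Delta}$ and $(I+\mathbf{\Delta})^{-1}$ commute. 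The only presentational difference is invoking the Neumann series explicitly rather than citing the small gain theorem; the estimates are identical.
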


\begin{proof}
$\Rightarrow$ If $\mathbf{K}$ internally stabilizes $\mathbf{G}$, \Cref{theorem:parameterization} guarantees that we have $\mathbf{X}, \mathbf{Y} \in \mathcal{RH}_\infty$ such that $\mathbf{K} = \mathbf{Y}\mathbf{X}^{-1}$ and $\mathbf{M}_l \mathbf{X} -\mathbf{N}_l \mathbf{Y} = I$. Thus,~\cref{eq:affine_residual} is trivially satisfied. 

$\Leftarrow$ Let $\mathbf{X}, \mathbf{Y} \in \mathcal{RH}_\infty$ satisfy~\cref{eq:affine_residual}. Then 
$$
    \mathbf{\Delta} := \mathbf{M}_l \mathbf{X} -\mathbf{N}_l \mathbf{Y} - I \in \mathcal{RH}_\infty, \quad \|\mathbf{\Delta}\|_\infty < 1. 
$$
By the small gain theorem, we know $(1\!+\!\mathbf{\Delta})^{-1} \!\in\! \mathcal{RH}_\infty$. \Cref{lemma:robustness} implies that $\mathbf{K} \!= \!\mathbf{Y}\mathbf{X}^{-1}$ internally stabilizes $\mathbf{G}$.

To prove~\cref{eq:error_bound},  applying $\mathbf{K} = \mathbf{Y}\mathbf{X}^{-1}$ leads to the closed-loop response as 
$$
    \begin{bmatrix} I & -\mathbf{G} \\ -\mathbf{K} & I \end{bmatrix}^{-1}\!\! = \!\begin{bmatrix} \mathbf{X}(I + \mathbf{\Delta})^{-1}\mathbf{M}_l  & \mathbf{X}(I + \mathbf{\Delta})^{-1}\mathbf{N}_l \\  \mathbf{Y}(I + \mathbf{\Delta})^{-1}\mathbf{M}_l & I + \mathbf{Y}(I + \mathbf{\Delta})^{-1}\mathbf{N}_l\end{bmatrix}\!.
$$
Considering $(I + \mathbf{\Delta})^{-1} = I - \mathbf{\Delta}(I + \mathbf{\Delta})^{-1}$, 
it is easy to verify that 
$$
\begin{aligned}
&\begin{bmatrix} I & -\mathbf{G} \\ -\mathbf{K} & I \end{bmatrix}^{-1} - \begin{bmatrix} \mathbf{X}\mathbf{M}_l & \mathbf{X}\mathbf{N}_l \\ \mathbf{Y}\mathbf{M}_l & I + \mathbf{Y}\mathbf{N}_l \end{bmatrix} \\
= &- \begin{bmatrix} \mathbf{X} \\ \mathbf{Y} \end{bmatrix}\mathbf{\Delta}(I + \mathbf{\Delta})^{-1}\begin{bmatrix} \mathbf{M}_l & \mathbf{N}_l \end{bmatrix}.
\end{aligned}
$$
In addition, we have the following $\mathcal{H}_\infty$ norm inequalities 
$$
\begin{aligned}
%    \|(I + \mathbf{\Delta})^{-1}\|_\infty \leq \frac{1}{1 - \epsilon},\\
    \|\mathbf{\Delta}(I + \mathbf{\Delta})^{-1}\|_\infty \leq  \|\mathbf{\Delta}\|_\infty \|(I + \mathbf{\Delta})^{-1}\|_\infty \leq \frac{\epsilon}{1 - \epsilon},
\end{aligned}
$$
and thus~\cref{eq:error_bound} follows. 
\end{proof}

We note that the condition~\cref{eq:affine_residual} has an interesting interpretation as a \textit{robust filtering} problem~\cite{geromel2000h,geromel2002robust}: it aims to find a stable filter  $\begin{bmatrix}
\mathbf{X} & \mathbf{Y}
\end{bmatrix} \in \mathcal{RH}_\infty$ such that the residual $\mathbf{M}_l \mathbf{X} -\mathbf{N}_l \mathbf{Y} - I$ has $\mathcal{H}_\infty$ norm less than 1. This filtering interpretation motivates the LMI development in~\Cref{section:filtering}. 

%{\color{red} Some comments about the filtering; a figure about the filtering setup}

\section{LMI-based Computation via Filtering} \label{section:filtering}

\subsection{$\mathcal{H}_\infty$ filtering problem}

We consider a \textbf{right $\mathcal{H}_\infty$ filtering} problem: given  $\mu > 0$ and $\mathbf{P}_1(z), \mathbf{P}_2(z) \in \mathcal{RH}_\infty$ with a state-space realization 
$$
    \begin{bmatrix} \mathbf{P}_1(z) & \mathbf{P}_2(z) \end{bmatrix} = \left [
        \hspace{-.2ex}
        \begin{array}{c|cc}
          A & B_1 & B_2 \\ \hline
          C & D_1 & D_2
        \end{array}
        \hspace{-.2ex}
        \right ],
$$
find a stable filter $\mathbf{F}(z) \in \mathcal{RH}_\infty$ such that 
\begin{equation} \label{eq:hinf_filter}
 \| \mathbf{P}_1(z) \mathbf{F}(z) - \mathbf{P}_2(z) \|_\infty < \mu.
\end{equation}

We call~\cref{eq:hinf_filter} as the \textit{right} $\mathcal{H}_\infty$ filtering problem, since the filter $\mathbf{F}(z)$ is on the right side of $\mathbf{P}_1(z)$. In the classical literature on filtering (see~\cite{geromel2000h,geromel2002robust} and the references therein), a \textit{left} $\mathcal{H}_\infty$ filtering problem is more common: find $\mathbf{F}(z) \in \mathcal{RH}_\infty$ such that 
\begin{equation} \label{eq:hinf_filter_dual}
 \| \mathbf{F}(z)\mathbf{H}_1(z)  - \mathbf{H}_2(z) \|_\infty < \mu.
\end{equation}
\Cref{Fig:filtering} illustrates these two types of filtering problems. It seems that most existing literature focuses on the {left} $\mathcal{H}_\infty$ filtering problem~\cref{eq:hinf_filter_dual}, while the right $\mathcal{H}_\infty$ filtering problem~\cref{eq:hinf_filter} has received less attention. Therefore, our LMI-based solution to~\cref{eq:hinf_filter} might be of independent interest. 

%The solution to the right $\mathcal{H}_\infty$ filtering problem

%{\color{red} Some comments that this problem seems not standard, as the standard filtering is on the left.}

\begin{figure}[t]
\setlength{\belowcaptionskip}{0pt}
\setlength{\abovecaptionskip}{9pt}
  \centering
  \begin{adjustbox}{minipage=\linewidth,scale=0.9}
  %\subfigure[]
  {\setlength{\unitlength}{0.008in}
\begin{picture}(200,110)(140,410)
\thicklines

% lines 
\put(234.5,440){\line(-1, 0){ 55}}
\put(180,510){\vector( 0, -1){ 31.5}}
\put(179.5,475){\circle{8}}
\put(180,440){\vector( 0, 1){ 31.5}}
%\put(181,440){\circle{10}}%\put(340,550){\line( 0, 1){ 70}}
\put(179.5,510){\line( 1, 0){ 24}}
\put(175,475){\vector(-1,0){ 30}}

\put(315.5,510){\vector( -1, 0){ 30}}
\put(253.5,510){\vector( -1, 0){ 18}}
\put(315.5,440){\vector(-1, 0){ 50}}
%\put(315.5,475){\circle{8}}
\put(315.5,475){\line(1,0){ 20}}
\put(315.5,510){\line( 0,-1){70}}
%\put(315.5,439.5){\line( -1,0){ 30.5}}
%\put(315.5,510){\line( 0,-1){ 30.5}}

% Dynamical components
\put(235,424){\framebox(30,30){}}
\put(241,431){\makebox(0,0)[lb]{$\mathbf{P}_2$}}

\put(255,494){\framebox(30,30){}}
\put(263,503){\makebox(0,0)[lb]{$\mathbf{F}$}}

\put(205,494){\framebox(30,30){}}
\put(211,501){\makebox(0,0)[lb]{$\mathbf{P}_1$}}

% signals 
\put(200,445){\makebox(0,0)[lb]{$\mathbf{y}_2$}}
\put(185,515){\makebox(0,0)[lb]{$\mathbf{y}_1$}}
\put(150,480){\makebox(0,0)[lb]{$\mathbf{y}$}}
\put(325,480){\makebox(0,0)[lb]{$\mathbf{u}$}}
%\put(285,415){\makebox(0,0)[lb]{$\mathbf{u}$}}

\put(185,483){\makebox(0,0)[lb]{\small +}}
\put(188,460){\makebox(0,0)[lb]{\small -}}

%\put(185,450){\makebox(0,0)[lb]{\tiny +}}
%\put(167,445){\makebox(0,0)[lb]{\tiny +}}

%\put(350,515){\makebox(0,0)[lb]{\tiny +}}
%\put(345,498){\makebox(0,0)[lb]{\tiny +}}
\put(240,400){\makebox(0,0)[lb]{\small (a)}}
\end{picture}}
  %\end{adjustbox}
 % \hspace{-40mm}
  %\begin{adjustbox}{minipage=\linewidth,scale=0.8}
 % \subfigure[]
  {\setlength{\unitlength}{0.008in}
\begin{picture}(200,115)(140,410)
\thicklines

% lines 
\put(234.5,440){\line(-1, 0){ 55}}
\put(180,510){\vector( 0, -1){ 31.5}}
\put(179.5,475){\circle{8}}
\put(180,440){\vector( 0, 1){ 31.5}}
%\put(181,440){\circle{10}}%\put(340,550){\line( 0, 1){ 70}}
\put(179.5,510){\line( 1, 0){ 24}}
\put(175,475){\vector(-1,0){ 30}}

\put(315.5,510){\vector( -1, 0){ 30}}
\put(253.5,510){\vector( -1, 0){ 18}}
\put(315.5,440){\vector(-1, 0){ 50}}
%\put(315.5,475){\circle{8}}
\put(315.5,475){\line(1,0){ 20}}
\put(315.5,510){\line( 0,-1){70}}
%\put(315.5,439.5){\line( -1,0){ 30.5}}
%\put(315.5,510){\line( 0,-1){ 30.5}}

% Dynamical components
\put(235,424){\framebox(30,30){}}
\put(241,431){\makebox(0,0)[lb]{$\mathbf{H}_2$}}

\put(255,494){\framebox(30,30){}}
\put(261,501){\makebox(0,0)[lb]{$\mathbf{H}_1$}}

\put(205,494){\framebox(30,30){}}
\put(213,503){\makebox(0,0)[lb]{$\mathbf{F}$}}

% signals 
\put(200,445){\makebox(0,0)[lb]{$\mathbf{y}_2$}}
\put(185,515){\makebox(0,0)[lb]{$\mathbf{y}_1$}}
\put(150,480){\makebox(0,0)[lb]{$\mathbf{y}$}}
\put(325,480){\makebox(0,0)[lb]{$\mathbf{u}$}}
%\put(285,415){\makebox(0,0)[lb]{$\mathbf{u}$}}

\put(185,483){\makebox(0,0)[lb]{\small +}}
\put(188,460){\makebox(0,0)[lb]{\small -}}

\put(240,400){\makebox(0,0)[lb]{\small (b)}}
%\put(185,450){\makebox(0,0)[lb]{\tiny +}}
%\put(167,445){\makebox(0,0)[lb]{\tiny +}}

%\put(350,515){\makebox(0,0)[lb]{\tiny +}}
%\put(345,498){\makebox(0,0)[lb]{\tiny +}}

\end{picture}}
  \end{adjustbox}
  \caption{(a) Right-filtering problem, where the filter $\mathbf{F}$ appears \textit{before} the dynamical system $\mathbf{P}_1$. (b) Left-filtering problem, where the filter $\mathbf{F}$ appears \textit{after} the dynamical system $\mathbf{H}_1$}\label{Fig:filtering}
\end{figure}
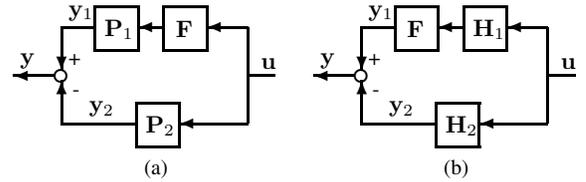

%We introduce the standard lemma for computing $\mathcal{H}_\infty$ norm of stable transfer functions. 
\begin{lemma} \label{lemma:Hinf}
    Given a stable transfer function $\mathbf{T}(z) = C(zI - A)^{-1}B + D \in \mathcal{RH}_\infty$, then $\|\mathbf{T}(z) \|^2_\infty < \mu$ if and only if there exists a positive definite matrix $P \succ 0$ such that 
    \vspace{-2mm}
    \begin{equation} \label{eq:HinfLMI}
        \begin{bmatrix} P & AP & B & 0 \\ PA^\tr & P & 0 & PC^\tr  \\ B^\tr & 0 & I & D^\tr \\ 0 & CP & D & \mu I \end{bmatrix} \succ 0.
    \end{equation}
\end{lemma}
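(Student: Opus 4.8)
The plan is to recognize \cref{eq:HinfLMI} as a disguised form of the discrete-time bounded real (KYP) lemma and to reduce it to a standard KYP-type inequality via Schur complements and a congruence transformation. Writing $M$ for the block matrix in \cref{eq:HinfLMI}, I first observe that since $P$ occupies the $(1,1)$ block, $M\succ 0$ is equivalent to $P\succ 0$ together with positivity of the Schur complement of $M$ with respect to $P$. Carrying out this elimination and then applying the congruence transformation $\mathrm{diag}(P^{-1},I,I)$ while substituting $X:=P^{-1}$ and repeatedly using $XP=PX=I$, the cross terms $PA^\tr P^{-1}AP$, $PA^\tr P^{-1}B$, $CP$, etc.\ collapse to the clean expressions $A^\tr X A$, $A^\tr X B$, $C$, leaving the equivalent condition $\tilde S\succ 0$, $X\succ 0$, where
\begin{equation*}
\tilde S := \begin{bmatrix} X - A^\tr X A & -A^\tr X B & C^\tr \\ -B^\tr X A & I - B^\tr X B & D^\tr \\ C & D & \mu I \end{bmatrix}.
\end{equation*}

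Next I would take the Schur complement of $\tilde S$ with respect to its positive-definite $(3,3)$ block $\mu I$, and rescale $\hat X := \mu X\succ 0$. This produces exactly the standard strict discrete-time bounded real inequality
\begin{equation*}
\begin{bmatrix} A^\tr \hat X A - \hat X + C^\tr C & A^\tr \hat X B + C^\tr D \\ B^\tr \hat X A + D^\tr C & B^\tr \hat X B - \mu I + D^\tr D \end{bmatrix} \prec 0 .
\end{equation*}
Each step in the chain $M\succ 0 \Leftrightarrow \tilde S\succ 0 \Leftrightarrow (\text{above})$ is reversible because Schur complements and congruences preserve definiteness. In particular, the $(1,1)$ two-by-two sub-block of $\tilde S$ already encodes the Stein inequality $X - A^\tr X A\succ 0$, so $M\succ 0$ by itself certifies that $A$ is Schur stable; thus stability of $A$ need not be assumed separately but is delivered for the forward direction.

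Finally I would invoke the discrete-time bounded real lemma: for a Schur-stable $A$, the existence of $\hat X\succ 0$ solving the displayed inequality is equivalent to $\|\mathbf{T}(z)\|_\infty^2 < \mu$ (setting $\gamma^2=\mu$ recovers the conventional statement). For a self-contained sufficiency argument one can instead read $V(\xi)=\xi^\tr X\xi$ as a storage function: the reduced inequality is a dissipation inequality for the supply rate $\mu\,\|w\|^2-\|z\|^2$, and summing it along trajectories with zero initial state telescopes to $\|z\|_2^2<\mu\,\|w\|_2^2$, i.e.\ the $\mathcal{H}_\infty$ bound; necessity then follows from the standard Riccati/limiting construction of $\hat X$.

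I expect the main obstacle to be purely bookkeeping: arranging the Schur-complement and congruence algebra so that the messy $P^{-1}$ cross terms cancel cleanly into the $X=P^{-1}$ form, and tracking the $\mu$ (rather than $\sqrt{\mu}$) scaling consistently so that the reduced inequality aligns with the textbook bounded real lemma. A minor technical point worth flagging is that the entire equivalence uses strict inequalities, so no minimality assumption on $(A,B,C,D)$ is needed.
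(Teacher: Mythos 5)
The paper states \Cref{lemma:Hinf} without proof, treating it as a standard discrete-time bounded real lemma in LMI form (the same family of characterizations as \cite{de2002extended}, which the paper cites for the extended variant in Appendix D), so there is no in-paper argument to compare against line by line. Your derivation is correct and is essentially the canonical way to justify this statement: the Schur complement with respect to the $(1,1)$ block $P$, followed by the congruence $\mathrm{diag}(P^{-1},I,I)$ with $X=P^{-1}$, does yield your $\tilde S\succ 0$; the further Schur complement against $\mu I$ and the rescaling $\hat X=\mu X$ land exactly on the strict discrete-time bounded real inequality with $\gamma^2=\mu$, which matches the lemma's $\|\mathbf{T}\|_\infty^2<\mu$ normalization (and is consistent with the paper's later use of $\mu^2 I$ in \cref{eq:HinfLMI_s1} to get $\|\cdot\|_\infty<\mu$). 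Your observation that the $(1,1)$ block of $\tilde S$ forces the Stein inequality, so feasibility of \cref{eq:HinfLMI} certifies Schur stability of $A$ for free in the sufficiency direction, is also right. One small caveat on your closing remark: strictness does dispense with minimality, but the necessity direction still requires the state matrix $A$ of the given realization to be Schur, which is strictly stronger than $\mathbf{T}\in\mathcal{RH}_\infty$ for a non-minimal realization (an unstable cancelled mode would make $X-A^\tr XA\succ 0$ infeasible while leaving $\mathbf{T}$ stable). This is the intended reading of the lemma, and it is satisfied where the paper applies it, since the realizations of $\mathbf{M}_l,\mathbf{N}_l$ in \cref{eq:coprime-realization} have a Schur state matrix by construction (Appendix A), but it is worth stating explicitly rather than claiming no assumption on the realization is needed.
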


\vspace{1mm}

The right $\mathcal{H}_\infty$ filtering is solved in the theorem below. 
\begin{theorem} \label{theorem:hinf_filtering}
    There exists $\mathbf{F}(z) \in \mathcal{RH}_\infty$ such that~\cref{eq:hinf_filter} holds if and only if there exist symmetric matrices $X, Z$, and matrices $Q, F, L, R$ of compatible dimensions such that 
    \begingroup
    \setlength\arraycolsep{0.8pt}
\def\arraystretch{0.9}
   \begin{align} \label{eq:filter_LMI}
      {\small \begin{bmatrix}
        X & Z & A X\!\! +\!\! B_1 L & A Z \!\!+\!\! B_1 L & B_1 R \!-\! \!B_2 & 0 \\
        \star & Z & Q & Q & F & 0 \\
        \star &\star  & X & Z & 0 & X C^\tr\!\! +\! L^\tr D_1^\tr \\
        \star & \star & \star & Z & 0 & Z C^\tr\! \!+\! L^\tr D_1^\tr \\
        \star & \star & \star & \star & I & R^\tr D_1^\tr \!-\! D_2^\tr \\
        \star & \star & \star & \star & \star & \mu^2 I
      \end{bmatrix} \!\succ\! 0,\!}
    \end{align}
    \endgroup
    where $\star$ denotes the symmetric parts. If~\cref{eq:filter_LMI} holds, a state-space realization of $\mathbf{F}(z) = \hat{C}(zI - \hat{A})^{-1}\hat{B} + \hat{D}$ is
         \begin{align} \label{eq:filter_LMI_state_space}
     \begin{bmatrix}
      \hat{A} & \hat{B} \\
      \hat{C} & \hat{D}
    \end{bmatrix} = \begin{bmatrix}
      U^\tr Z^{-1} & 0 \\
      0 & I
    \end{bmatrix}\begin{bmatrix}
      Q & F \\
      L & R
    \end{bmatrix}\begin{bmatrix}
      U^{-1} & 0 \\
      0 & I
    \end{bmatrix}^\tr,% \\
    %Z &:= - N V U^\tr = X - N.
\end{align}
    where $U$ is an arbitrary non-singular matrix. 
\end{theorem}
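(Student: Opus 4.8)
The plan is to reduce the right $\mathcal{H}_\infty$ filtering condition~\cref{eq:hinf_filter} to the dilated bounded-real inequality of \Cref{lemma:Hinf} applied to the error system $\mathbf{T}(z) := \mathbf{P}_1(z)\mathbf{F}(z) - \mathbf{P}_2(z)$, and then to linearize the resulting bilinear matrix inequality by imposing a \emph{structured} Lyapunov matrix together with a bijective change of variables. First I would build a state-space realization of $\mathbf{T}$. Writing $\mathbf{F}(z) = \hat{C}(zI-\hat{A})^{-1}\hat{B} + \hat{D}$ and exploiting that $\mathbf{P}_1$ and $\mathbf{P}_2$ share the same $(A,C)$, the series connection $\mathbf{P}_1\mathbf{F}$ has state $(x_1,\hat{\xi})$, and subtracting $\mathbf{P}_2$ (state $x_2$) in the coordinate $e := x_1 - x_2$ makes $x_2$ unobservable; discarding it yields an order-$2n$ realization with $\mathcal{A} = \begin{bmatrix} A & B_1\hat{C} \\ 0 & \hat{A}\end{bmatrix}$, $\mathcal{B} = \begin{bmatrix} B_1\hat{D}-B_2 \\ \hat{B}\end{bmatrix}$, $\mathcal{C} = \begin{bmatrix} C & D_1\hat{C}\end{bmatrix}$, and $\mathcal{D} = D_1\hat{D}-D_2$. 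Since $\|\mathbf{T}\|_\infty < \mu \iff \|\mathbf{T}\|_\infty^2 < \mu^2$, \Cref{lemma:Hinf} (with threshold $\mu^2$) gives that~\cref{eq:hinf_filter} holds iff there is $\mathcal{P}\succ 0$ solving the dilated inequality with data $(\mathcal{A},\mathcal{B},\mathcal{C},\mathcal{D})$ — a condition that is bilinear in $\mathcal{P}$ and the filter matrices because of the products $\mathcal{A}\mathcal{P}$, $\mathcal{P}\mathcal{C}^{\tr}$, $\mathcal{B}$, $\mathcal{D}$.

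Next I would take the full-order case $q=n$, partition $\mathcal{P}$ into $n\times n$ blocks, and restrict attention to the structured form $\mathcal{P} = \begin{bmatrix} X & Z \\ Z & Z\end{bmatrix}$, introducing the change of variables $Q := \hat{A}Z$, $L := \hat{C}Z$, $F := \hat{B}$, $R := \hat{D}$. Substituting this into the dilated inequality and using $Z\hat{C}^{\tr} = L^{\tr}$, a block-by-block computation gives $\mathcal{A}\mathcal{P} = \begin{bmatrix} AX+B_1L & AZ+B_1L \\ Q & Q\end{bmatrix}$ and $\mathcal{P}\mathcal{C}^{\tr} = \begin{bmatrix} XC^{\tr}+L^{\tr}D_1^{\tr} \\ ZC^{\tr}+L^{\tr}D_1^{\tr}\end{bmatrix}$, so that every entry collapses exactly into the affine expressions of~\cref{eq:filter_LMI}. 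The leading principal block $\begin{bmatrix} X & Z \\ Z & Z\end{bmatrix}\succ 0$ forces $Z\succ 0$, hence $Z$ is invertible and the map $(\hat{A},\hat{B},\hat{C},\hat{D})\mapsto(Q,F,L,R)$ is a bijection; inverting it recovers the filter. The free nonsingular $U$ in~\cref{eq:filter_LMI_state_space} merely encodes filter state-similarity: the realization for general $U$ equals that for $U=Z$ conjugated by $M = ZU^{-\tr}$, which leaves the transfer function $\mathbf{F}$ unchanged, so I would verify this similarity explicitly to justify the ``arbitrary $U$'' clause.

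For sufficiency ($\Leftarrow$), given a solution of~\cref{eq:filter_LMI} I recover $\mathbf{F}$ as above; the leading $2\times2$ block is, after a Schur complement, the discrete Lyapunov inequality $\mathcal{P}-\mathcal{A}\mathcal{P}\mathcal{A}^{\tr}\succ 0$ with $\mathcal{P}\succ 0$, certifying that $\mathcal{A}$ is Schur stable and hence $\hat{A}$ is stable, so $\mathbf{F}\in\mathcal{RH}_\infty$ and $\mathbf{T}$ is stable; \Cref{lemma:Hinf} then delivers $\|\mathbf{T}\|_\infty<\mu$. For necessity ($\Rightarrow$), starting from a full-order filter achieving $\|\mathbf{T}\|_\infty<\mu$, \Cref{lemma:Hinf} provides some $\mathcal{P}\succ 0$, and I would apply the filter state-similarity $\Theta = \mathrm{diag}(I,T)$ with $T = \mathcal{P}_{22}\mathcal{P}_{12}^{-1}$, which transforms $\mathcal{P}$ into the required structured form $\begin{bmatrix} X & Z\\ Z & Z\end{bmatrix}$ with $Z = \mathcal{P}_{12}\mathcal{P}_{22}^{-1}\mathcal{P}_{12}^{\tr}$.

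The hard part will be this last step: the congruence is well-defined only when the coupling block $\mathcal{P}_{12}$ is nonsingular, and a generic $\mathcal{P}$ returned by \Cref{lemma:Hinf} need not satisfy this. The way around it is that~\cref{eq:filter_LMI} and the bounded-real inequality are \emph{strict}, hence open conditions: an arbitrarily small symmetric perturbation of $\mathcal{P}$ makes $\mathcal{P}_{12}$ invertible while preserving feasibility, so invertibility of $\mathcal{P}_{12}$ may be assumed without loss of generality. I would also record that restricting to full-order ($q=n$) filters is not restrictive for the bound, since any achievable performance is attainable by a full-order realization. Everything else — the block expansions, the Schur-complement stability argument, and the similarity verification for $U$ — is routine linear algebra that I would carry out but not belabor.
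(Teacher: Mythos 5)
Your proposal is correct and its skeleton matches the paper's: realize the error system $\mathbf{P}_1\mathbf{F}-\mathbf{P}_2$ in block-triangular form, invoke \Cref{lemma:Hinf} with threshold $\mu^2$, and linearize the resulting bilinear inequality in $(\mathcal{P},\hat A,\hat B,\hat C,\hat D)$. Where you genuinely differ is in how the linearization is organized. The paper partitions both $\tilde P$ and $\tilde P^{-1}$, builds the congruence matrix $\tilde T=\mathrm{diag}(I,-V^\tr N)$ with $N=Y^{-1}$, and uses the Scherer/Geromel-style change of variables $\left[\begin{smallmatrix}Q&F\\L&R\end{smallmatrix}\right]=\mathrm{diag}(-NV,I)\left[\begin{smallmatrix}\hat A&\hat B\\ \hat C&\hat D\end{smallmatrix}\right]\mathrm{diag}(U^\tr,I)$, so that congruence of the whole inequality by $\mathrm{diag}(\tilde T,\tilde T,I,I)$ yields \cref{eq:filter_LMI}. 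You instead impose the structured certificate $\mathcal{P}=\left[\begin{smallmatrix}X&Z\\ Z&Z\end{smallmatrix}\right]$ directly, with the much simpler substitution $Q=\hat AZ$, $L=\hat CZ$, $F=\hat B$, $R=\hat D$, and justify losslessness of the structure by a filter state-similarity plus a perturbation argument to make the coupling block invertible. The two mechanisms produce the identical LMI; your route buys a more transparent change of variables and makes explicit the genericity step that the paper leaves implicit (it simply asserts that $U$ and $V$ are invertible when $\hat A$ and $A$ have the same dimension). Your recovery of $\mathbf{F}$, the Schur-complement stability argument, and the reading of the arbitrary $U$ in \cref{eq:filter_LMI_state_space} as a similarity are all sound.

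Two small repairs. The similarity should be $T=\mathcal{P}_{12}\mathcal{P}_{22}^{-1}$, not $T=\mathcal{P}_{22}\mathcal{P}_{12}^{-1}$: with $\mathcal{P}'=\Theta\mathcal{P}\Theta^\tr$ and $\Theta=\mathrm{diag}(I,T)$, your choice gives off-diagonal block $\mathcal{P}_{12}\mathcal{P}_{12}^{-\tr}\mathcal{P}_{22}$, which is neither symmetric nor equal to the $(2,2)$ block in general, whereas $T=\mathcal{P}_{12}\mathcal{P}_{22}^{-1}$ yields exactly the $Z=\mathcal{P}_{12}\mathcal{P}_{22}^{-1}\mathcal{P}_{12}^\tr$ you state. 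Also, the reduction to full-order filters in the necessity direction is asserted rather than proved; padding covers $q<n$ but $q>n$ would need an elimination-lemma or order-reduction argument. The paper has the same gap, so you are no less rigorous than the original on this point.
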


\begin{proof}
    Let a state-space realization of $\mathbf{F}(z)$ be 
    $
        \mathbf{F}(z) = \left [
        \hspace{-.2ex}
        \begin{array}{c|c}
          {\hat{A}} & {\hat{B}} \\ \hline
          {\hat{C}} & {\hat{D}}
        \end{array}
        \hspace{-.2ex}
        \right ].
    $
    Standard system operations (see Appendix C) lead to the following  state-space realization 
    $$
    \begin{aligned}
        \mathbf{P}_1(z) \mathbf{F}(z) - \mathbf{P}_2(z)&= \left [
      \hspace{-.2ex}
      \begin{array}{cc|c}
        A & B_1 {\hat{C}} & B_1 {\hat{D}} - B_2 \\
        0 & {\hat{A}} & {\hat{B}} \\ \hline
        C & D_1 {\hat{C}} & D_1 {\hat{D}} - D_2
      \end{array}
      \hspace{-.2ex}
      \right ] \\
      &= : \left [
      %\hspace{-.2ex}
      \begin{array}{c|c}
        \tilde{A} & \tilde{B} \\ \hline
        \tilde{C} & \tilde{D}
      \end{array}
      \hspace{-.2ex}
      \right ] . 
    \end{aligned}
    $$
    By \Cref{lemma:Hinf}, we know~\cref{eq:hinf_filter} holds if and only if there exists a positive definite matrix $\tilde{P}$ such that 
    \begin{equation} \label{eq:HinfLMI_s1}
        \begin{bmatrix} \tilde{P} & \tilde{A}\tilde{P} & \tilde{B} & 0 \\ \tilde{P}\tilde{A}^\tr & \tilde{P} & 0 & \tilde{P}\tilde{C}^\tr  \\ \tilde{B}^\tr & 0 & I & \tilde{D}^\tr \\ 0 & \tilde{C}\tilde{P} & \tilde{D} & \mu^2 I \end{bmatrix} \succ 0.
    \end{equation}
    Note that \cref{eq:HinfLMI_s1} is bilinear in terms of the design variable $\tilde{P}$ and the filter realization $\hat{A}, \hat{B}, \hat{C}, \hat{D}$. Motivated by the nonlinear change of variables in~\cite{geromel2000h,scherer1997multiobjective}, we partition the Lyapunov variable $\tilde{P}$ 
%       \begin{align*}
%     \tilde{P} &=
%     \begin{bmatrix}
%       P_{11} & P_{12} \\
%       P_{12}^\tr & P_{22}
%     \end{bmatrix}, &
%     \tilde{X} &=
%     \begin{bmatrix}
%       X & \hat{U} \\
%       U & \hat{X}
%     \end{bmatrix}, &
%     \tilde{X}^{-1} &=
%     \begin{bmatrix}
%       N^{-1} & \hat{V}\tr \\
%       V\tr & \hat{Y}\tr
%     \end{bmatrix}
%   \end{align*}
and its inverse as 
   \begingroup
    \setlength\arraycolsep{1.3pt}
\def\arraystretch{1}
$$
    \tilde{P} := \begin{bmatrix} X & U \\ U^\tr & \hat{X} \end{bmatrix}, %\in \mathbb{S}^{n +n_f}_+,
    \quad \tilde{P}^{-1} := \begin{bmatrix} Y & V \\ V^\tr & \hat{Y} \end{bmatrix}. %\in \mathbb{S}^{n +n_f}_+.
$$
\endgroup
Since $\tilde{P}\tilde{P}^{-1} = I$, we have 
\begin{equation} \label{eq:inverse_equality}
    XY + UV^\tr = I.
\end{equation}

Let $\hat{A}$ and $A$ have the same dimension, then $U$ and $V$ are invertible. We define $N: = Y^{-1}$ and %a transformation matrix 
$$
    \tilde{T} := \begin{bmatrix}
    I & 0 \\ 0 & -V^\tr N
    \end{bmatrix}.
$$
We further define a change of variables $Z := - N V U^\tr = X - N$ (derived from~\cref{eq:inverse_equality}), which is symmetric, and 
%\begin{subequations}
\begin{align} \label{eq:change_of_variables}
     \begin{bmatrix}
      Q & F \\
      L & R
    \end{bmatrix}
     &:=
    \begin{bmatrix}
      - N V & 0 \\
      0 & I
    \end{bmatrix}
    \begin{bmatrix}
      \hat{A} & \hat{B} \\
      \hat{C} & \hat{D}
    \end{bmatrix}
    \begin{bmatrix}
      U^\tr & 0 \\
      0 & I
    \end{bmatrix}. % \\
    %Z &:= - N V U^\tr = X - N.
\end{align}
%\end{subequations}
%
We can then verify that (some detailed computations are presented in the appendix)
\begin{subequations} \label{eq:transformation}
\begin{align}
 \tilde{T}^\tr \tilde{P} \tilde{T} &=
    \begin{bmatrix}
      X & Z \\
      Z & Z
    \end{bmatrix}, \label{eq:transformation_a}\\ 
    \tilde{T}^\tr \tilde{A} \tilde{P} \tilde{T}
    &= \begin{bmatrix}
      A X + B_1 L & AZ + B_1 L \\
      Q & Q
    \end{bmatrix},  \label{eq:transformation_b}\\
    \tilde{T}^\tr \tilde{B} &=
    \begin{bmatrix}
      B_1 R - B_2 \\
      F
    \end{bmatrix}, \label{eq:transformation_c} \\
    \tilde{C} \tilde{P} \tilde{T}
    &= \begin{bmatrix}
      C X + D_1 L & C Z + D_1 L
    \end{bmatrix}.  \label{eq:transformation_d}
\end{align}
\end{subequations}
Then,~\cref{eq:HinfLMI_s1} is equivalent to
\begingroup
    \setlength\arraycolsep{2pt}
\def\arraystretch{0.9}
$$
 {\begin{bmatrix} \tilde{T} & & & \\ & \tilde{T} & & \\ & & I & \\ & &  & I \end{bmatrix}^\tr \begin{bmatrix} \tilde{P} & \tilde{A}\tilde{P} & \tilde{B} & 0 \\ \tilde{P}\tilde{A}^\tr & \tilde{P} & 0 & \tilde{P}\tilde{C}^\tr  \\ \tilde{B}^\tr & 0 & I & \tilde{D}^\tr \\ 0 & \tilde{C}\tilde{P} & \tilde{D} & \mu^2 I \end{bmatrix}\begin{bmatrix} \tilde{T} & & & \\ & \tilde{T} & & \\ & & I & \\ & &  & I \end{bmatrix} \succ 0,}
$$
\endgroup
which turns out to be the same as~\cref{eq:filter_LMI}. From~\cref{eq:change_of_variables}, the state-space realization of $\mathbf{F}(z)$ is
$$
    \begin{bmatrix}
      \hat{A} & \hat{B} \\
      \hat{C} & \hat{D}
    \end{bmatrix} = \begin{bmatrix}
      - N V & 0 \\
      0 & I
    \end{bmatrix}^{-1}\begin{bmatrix}
      Q & F \\
      L & R
    \end{bmatrix} \begin{bmatrix}
      U^\tr & 0 \\
      0 & I
    \end{bmatrix}^{-1}.
$$
We only need to prove $-(NV)^{-1} = U^\tr Z^{-1}$, which is the same as (note that the last equation is~\cref{eq:inverse_equality})
$$
\begin{aligned}
    V^{-1}Y + U^\tr Z^{-1} = 0 &\quad \Leftrightarrow  \quad  Y(X - Y^{-1})  + VU^\tr = 0  \\
   & \quad \Leftrightarrow  \quad  YX - I  + VU^\tr = 0, \\
   & \quad \Leftrightarrow  \quad  XY + UV^\tr = I,
\end{aligned}
$$
where the first equivalence applied the fact that $Z = X - Y^{-1}$. 
This completes the proof. 
\end{proof}

The linearization of the bilinear inequality~\cref{eq:HinfLMI_s1} via the nonlinear change of variables in~\cref{eq:change_of_variables} and~\cref{eq:transformation} is motivated by the classical literature on robust filtering~\cite{geromel2000h,geromel2002robust}. 
% Similar linearization idea can also be traced back to the seminal work on LMI computation of $\mathcal{H}_\infty$ control~\cite{gahinet1994linear}. 
%
Due to the difference between \textit{right} and \textit{left} filtering problems, we remark that the LMI characterization in~\cref{eq:filter_LMI} has not appeared in~\cite{geromel2000h,geromel2002robust}, and thus \Cref{theorem:hinf_filtering} might have independent interest.  Note that we have used the standard $\mathcal{H}_\infty$ LMI in \Cref{eq:HinfLMI}, and that one can further derive a similar LMI to solve~\cref{eq:hinf_filter} based on the extended $\mathcal{H}_\infty$ LMI in~\cite{de2002extended}. We provide such a characterization in Appendix D. 

%{\color{red} Some comments on filtering problem; comparison with the literature}

\subsection{Enforcing internal stability via an LMI}

From \Cref{theorem:hinf_filtering}, we can derive an equivalent LMI formulation for the internal stability condition in~\Cref{theorem:internal_stability_hinf}. This is formally stated in the theorem below.

\begin{theorem}\label{theorem:feas_lmi}
      Given a coprime factorization~\cref{eq:coprime} with  $\mathbf{G} = \mathbf{M}_l^{-1} \mathbf{N}_l$. Let $\mathbf{M}_l$ and $\mathbf{N}_l$ have the state-space realization
     \begin{equation} \label{eq:coprime-realization}
         \begin{bmatrix}
        \mathbf{M}_l(z) & \mathbf{N}_l(z)
      \end{bmatrix}
      =
    \left [
      \hspace{-.2ex}
      \begin{array}{c|cc}
        A & B_M & B_N \\ \hline
        C & D_M & D_N
      \end{array}
      \hspace{-.2ex}
      \right ].
     \end{equation}
       There exist $\mathbf{X}(z)$ and $\mathbf{Y}(z)$ in $\mathcal{RH}_\infty$ such that
    \begin{equation}
      \| \mathbf{M}_l(z) \mathbf{X}(z) - \mathbf{N}(z) \mathbf{Y}(z) - I\|_\infty < \epsilon
    \end{equation}
    
    \begin{figure*}[b]
    \begin{equation}
    \begin{aligned} \label{eq:stabilization_LMI}
     {\small \begin{bmatrix}
        X & Z & f_1(X,L_X, L_Y) & f_2(Z,L_X, L_Y)
         & f_3(R_X,R_Y) & 0 \\
         \star & Z & Q & Q & F & 0 \\
         \star&  \star & X & Z & 0 & f_4(X, L_X, L_Y) \\
         \star &  \star&  \star & Z & 0 & f_5(Z, L_X, L_Y)  \\
         \star &  \star&  \star&  \star& I & f_6(R_X,R_Y) \\
         \star&  \star& \star & \star & \star & {\epsilon}^2 I
      \end{bmatrix} \succ 0.}
    \end{aligned}
    \end{equation}
\end{figure*}
    
    \noindent if and only if there exist symmetric matrices $X$, $Z$, and matrices $Q$, $F$, $L_X$, $L_Y$, $R_X$ and
    $R_Y$ of compatible size such that the LMI~\cref{eq:stabilization_LMI} holds. In~\cref{eq:stabilization_LMI}, notation $\star$ denotes the symmetric parts and $f_i, i = 1, \ldots, 6$ are linear functions as
    $$
    \begin{aligned}
        f_1(X,L_X, L_Y) &= A X + B_M L_X - B_N L_Y, \\
        f_2(Z,L_X,L_Y) &= A Z + B_M L_X - B_N L_Y, \\
        f_3(R_X, R_Y)&=B_M R_X - B_N R_Y, \\
        f_4(X, L_X, L_Y) &= X^\tr C^\tr + L_X^\tr D_M^\tr
        - L_Y^\tr D_N^\tr, \\
        f_5(Z, L_X, L_Y) &= Z^\tr C^\tr + L_X^\tr D_M^\tr
        - L_Y^\tr D_N^\tr,\\
        f_6(R_X, R_Y)  &= R_X^\tr D_M^\tr - R_Y^\tr D_N^\tr - I.\\
       % f_4(X, L_X, L_Y) &= X^\tr C^\tr + L_X^\tr D_M^\tr
       % - L_Y^\tr D_N^\tr\\ 
        %f_5(Z, L_X, L_Y) &= Z^\tr C^\tr + L_X^\tr D_M^\tr
        %- L_Y^\tr D_N^\tr\\
        %f_6(R_X, R_Y)  &= R_X^\tr D_M^\tr - R_Y^\tr D_N^\tr - I
    \end{aligned}
    $$
%     \begingroup
%     \setlength\arraycolsep{1pt}
% \def\arraystretch{0.9}
%     \begin{align} \label{eq:stabilization_LMI}
%      {\small \begin{bmatrix}
%         X & Z & f_1(X,L_X, L_Y) & f_2(Z,L_X, L_Y)
%          & f_3(R_X,R_Y) & 0 \\
%          \star & Z & Q & Q & F & 0 \\
%          \star&  \star & X & Z & 0 & f_4(X, L_X, L_Y) \\
%          \star &  \star&  \star & Z & 0 & f_5(Z, L_X, L_Y)  \\
%          \star &  \star&  \star&  \star& I & f_6(R_X,R_Y) \\
%          \star&  \star& \star & \star & \star & \epsilon I
%       \end{bmatrix} \succ 0,}
%     \end{align}
%     \endgroup
    %
    If~\cref{eq:stabilization_LMI} holds, state-space realizations for $\mathbf{X}(z)$ and $\mathbf{Y}(z)$ are
    % $$ 
    % \begin{aligned}
    % \mathbf{X}(z) &= \hat{C}_X(zI -  \hat{A})^{-1}\hat{B} + \hat{D}_X, \\
    % \mathbf{Y}(z) &= \hat{C}_Y(zI -  \hat{A})^{-1}\hat{B} + \hat{D}_Y, 
    % \end{aligned}
    % $$ 
    \begin{align} \label{eq:X_Y_realization_stabilization}
      \begin{bmatrix}
        \mathbf{X}(z) \\ \mathbf{Y}(z)
      \end{bmatrix}
      =
      \left [
        \hspace{-.2ex}
        \begin{array}{c|c}
          \hat{A} & \hat{B} \\ \hline
          \hat{C}_X & \hat{D}_X \\
          \hat{C}_Y & \hat{D}_Y
        \end{array}
        \hspace{-.2ex}
        \right ]
      \end{align}
      where 
      \begin{equation*} 
        \begin{bmatrix}
          \hat{A} & \hat{B} \\ 
          \hat{C}_X & \hat{D}_X \\
          \hat{C}_Y & \hat{D}_Y
        \end{bmatrix} = 
        \begin{bmatrix}
        U^\tr Z^{-1} & 0 & 0 \\
        0 & I & 0 \\
        0 & 0 & I
        \end{bmatrix}
        \begin{bmatrix}
        Q & F \\ 
        L_X & R_X \\
        L_Y & R_Y 
        \end{bmatrix}
        \begin{bmatrix}
        U^{-1} & 0 \\
        0 & I
        \end{bmatrix}^\tr
      \end{equation*}
 and $U$ is an arbitrary
    non-singular matrix.
\end{theorem}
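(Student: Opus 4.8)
The plan is to recognize \Cref{theorem:feas_lmi} as a direct specialization of the right $\mathcal{H}_\infty$ filtering result \Cref{theorem:hinf_filtering}. The key observation is that the residual $\mathbf{M}_l \mathbf{X} - \mathbf{N}_l \mathbf{Y} - I$ can be written as $\mathbf{P}_1 \mathbf{F} - \mathbf{P}_2$ by setting
$$
\mathbf{P}_1 = \begin{bmatrix} \mathbf{M}_l & -\mathbf{N}_l \end{bmatrix}, \quad \mathbf{F} = \begin{bmatrix} \mathbf{X} \\ \mathbf{Y} \end{bmatrix}, \quad \mathbf{P}_2 = I,
$$
so that the constraint $\|\mathbf{M}_l \mathbf{X} - \mathbf{N}_l \mathbf{Y} - I\|_\infty < \epsilon$ is exactly an instance of \cref{eq:hinf_filter} with $\mu = \epsilon$. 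Since \Cref{theorem:hinf_filtering} is already stated as an equivalence, invoking it on this instance immediately yields the desired ``if and only if'' together with a state-space realization of the filter $\mathbf{F}$, from which $\mathbf{X}$ and $\mathbf{Y}$ are recovered.

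First I would assemble the state-space data. Using the realization \cref{eq:coprime-realization} of $\begin{bmatrix} \mathbf{M}_l & \mathbf{N}_l \end{bmatrix}$ and the fact that $\mathbf{P}_2 = I$ is static, a realization of $\begin{bmatrix} \mathbf{P}_1 & \mathbf{P}_2 \end{bmatrix}$ is
$$
\left[ \begin{array}{c|ccc} A & B_M & -B_N & 0 \\ \hline C & D_M & -D_N & I \end{array} \right],
$$
which identifies $B_1 = \begin{bmatrix} B_M & -B_N \end{bmatrix}$, $D_1 = \begin{bmatrix} D_M & -D_N \end{bmatrix}$, $B_2 = 0$, and $D_2 = I$ in the notation of \Cref{theorem:hinf_filtering}. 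Because $\mathbf{F}$ has two stacked output blocks, the filter variables $L, R$ of \cref{eq:filter_LMI} partition as $L = \begin{bmatrix} L_X \\ L_Y \end{bmatrix}$ and $R = \begin{bmatrix} R_X \\ R_Y \end{bmatrix}$. Substituting these data into \cref{eq:filter_LMI} and expanding the blocks $B_1 L$, $B_1 R - B_2$, $X C^\tr + L^\tr D_1^\tr$, and $R^\tr D_1^\tr - D_2^\tr$ produces precisely the linear functions $f_1, \ldots, f_6$, and hence the LMI \cref{eq:stabilization_LMI}. The realization \cref{eq:X_Y_realization_stabilization} then follows by partitioning the output rows of \cref{eq:filter_LMI_state_space} according to the split of $L$ and $R$, noting that $\mathbf{X}$ and $\mathbf{Y}$ share the common filter dynamics $(\hat{A}, \hat{B})$ and differ only in their output equations.

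I expect no deep obstacle here, since the argument is essentially bookkeeping rather than a new proof; the main things to get right are the sign conventions and the block partition. Concretely, the care points are: carrying the minus sign on $\mathbf{N}_l$ consistently through $B_1$, $D_1$ and the functions $f_2, f_4, f_5, f_6$; verifying that the static $\mathbf{P}_2 = I$ contributes $B_2 = 0$ and $D_2 = I$ without enlarging the state dimension, so that $\hat{A}$ inherits the dimension of $A$ and $\mathbf{X}, \mathbf{Y}$ have order $n$; and checking that splitting the single filter output into $\mathbf{X}$ and $\mathbf{Y}$ blocks leaves the $(\hat{A}, \hat{B})$ portion of \cref{eq:filter_LMI_state_space} untouched while partitioning only $(\hat{C}, \hat{D})$ into $(\hat{C}_X, \hat{D}_X)$ and $(\hat{C}_Y, \hat{D}_Y)$. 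Once these identifications are confirmed, both the equivalence and the realization are immediate consequences of \Cref{theorem:hinf_filtering}.
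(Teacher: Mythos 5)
Your proposal is correct and follows exactly the paper's own proof: the paper likewise sets $\mathbf{P}_1 = \begin{bmatrix} \mathbf{M}_l & -\mathbf{N}_l \end{bmatrix}$, $\mathbf{P}_2 = I$, $\mathbf{F} = \begin{bmatrix} \mathbf{X}^\tr & \mathbf{Y}^\tr \end{bmatrix}^\tr$ with $B_1 = \begin{bmatrix} B_M & -B_N \end{bmatrix}$, $D_1 = \begin{bmatrix} D_M & -D_N \end{bmatrix}$, and invokes \Cref{theorem:hinf_filtering}. Your bookkeeping of the sign conventions and the partition of $L$, $R$ into $(L_X, L_Y)$, $(R_X, R_Y)$ is accurate and in fact slightly more explicit than the paper's one-line argument.
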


\begin{proof}
Define $\mathbf{P}_1(z) = \begin{bmatrix}
        \mathbf{M}_l(z) & -\mathbf{N}_l(z)
      \end{bmatrix}$, and $\mathbf{P}_2(z) = I$, which have a state-space realization as 
      $$
        \begin{bmatrix}
        \mathbf{P}_1(z) & \mathbf{P}_2(z)
        \end{bmatrix} = \left [
        \hspace{-.2ex}
        \begin{array}{c|cc}
          A & B_1 & 0 \\ \hline
          C & D_1 & I
        \end{array}
        \hspace{-.2ex}
        \right ],
      $$
      with $B_1 = \begin{bmatrix} B_M & - B_N \end{bmatrix}$ and $D_1 = \begin{bmatrix} D_M & - D_N \end{bmatrix}$. 
      We let 
       \begin{align*}
      \mathbf{F}(z) &=
      \begin{bmatrix}
        \mathbf{X}(z) \\ \mathbf{Y}(z)
      \end{bmatrix}
      =
      \left [
        \hspace{-.2ex}
        \begin{array}{c|c}
          \hat{A} & \hat{B} \\ \hline
          \hat{C}_X & \hat{D}_X \\
          \hat{C}_Y & \hat{D}_Y
        \end{array}
        \hspace{-.2ex}
        \right ].
    \end{align*}
    Applying~\Cref{theorem:hinf_filtering} to $\|\mathbf{P}_1(z)\mathbf{F}(z) - \mathbf{P}_2(z)\|_\infty < \epsilon$ completes the proof. 
\end{proof}

Setting $\epsilon = 1$ recovers the internal stability condition~\cref{eq:affine_residual} in \Cref{theorem:internal_stability_hinf}. Thus, the following corollary is immediate. 
\begin{corollary} \label{corollary:stability}
Given a coprime factorization~\cref{eq:coprime} with  $\mathbf{G} = \mathbf{M}_l^{-1} \mathbf{N}_l$ and \cref{eq:coprime-realization}, the controller $\mathbf{K}$ internally stabilizes $\mathbf{G}$ if and only if there exist symmetric matrices $X$, $Z$, and matrices $Q$, $F$, $L_X$, $L_Y$, $R_X$ and
    $R_Y$ of compatible size such that the LMI~\cref{eq:stabilization_LMI} holds with $\epsilon = 1$. If~\cref{eq:stabilization_LMI} holds with $\epsilon = 1$, the following controller $ \mathbf{K}$ internally stabilizes $\mathbf{G}$,
    \begin{equation} \label{eq:state-space-K}
        \mathbf{K} = \mathbf{Y}\mathbf{X}^{-1} = \left [
      %\hspace{-.2ex}
      \begin{array}{c|c}
        \hat{A} - \hat{B}\hat{D}_X^{-1}\hat{C}_X & -\hat{B}\hat{D}_X^{-1} \\  \hline 
       -\hat{C}_Y+ \hat{D}_Y \hat{D}_X^{-1}\hat{C}_X & \hat{D}_Y\hat{D}_X^{-1}
      \end{array}
      \hspace{-.2ex}
      \right ],
    \end{equation}
     where $\mathbf{Y}$ and $\mathbf{X}$ have state-space realizations in~\cref{eq:X_Y_realization_stabilization}.  
\end{corollary}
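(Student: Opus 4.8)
The plan is to obtain \Cref{corollary:stability} as the $\epsilon = 1$ specialization of \Cref{theorem:feas_lmi}, combined with \Cref{theorem:internal_stability_hinf}, and then to read off the realization \cref{eq:state-space-K} by a standard elimination argument. First I would invoke \Cref{theorem:feas_lmi} with the residual level $\epsilon = 1$: this already gives that the LMI \cref{eq:stabilization_LMI} is feasible, with the stated realizations \cref{eq:X_Y_realization_stabilization} for $\mathbf{X}(z)$ and $\mathbf{Y}(z)$, if and only if there exist $\mathbf{X}, \mathbf{Y} \in \mathcal{RH}_\infty$ with $\|\mathbf{M}_l \mathbf{X} - \mathbf{N}_l \mathbf{Y} - I\|_\infty < 1$. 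It therefore remains only to identify this residual condition with internal stability.

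For that identification I would appeal to \Cref{theorem:internal_stability_hinf}. Because the LMI is strict, feasibility yields $\|\mathbf{M}_l \mathbf{X} - \mathbf{N}_l \mathbf{Y} - I\|_\infty$ strictly below $1$, so one can always choose an intermediate level $\epsilon'$ with $\|\mathbf{M}_l \mathbf{X} - \mathbf{N}_l \mathbf{Y} - I\|_\infty < \epsilon' < 1$ and apply the backward direction of \Cref{theorem:internal_stability_hinf} to conclude that $\mathbf{K} = \mathbf{Y}\mathbf{X}^{-1}$ internally stabilizes $\mathbf{G}$. Conversely, if some stabilizing $\mathbf{K}$ exists, \Cref{theorem:parameterization} produces $\mathbf{X}, \mathbf{Y} \in \mathcal{RH}_\infty$ with exact residual $\mathbf{M}_l \mathbf{X} - \mathbf{N}_l \mathbf{Y} - I = 0$, whose norm is trivially below $1$, so the LMI is feasible with $\epsilon = 1$. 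This closes the equivalence.

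The remaining task is the realization \cref{eq:state-space-K}. Here I would exploit that the realizations in \cref{eq:X_Y_realization_stabilization} give $\mathbf{X}$ and $\mathbf{Y}$ the \emph{same} state matrices $(\hat{A}, \hat{B})$ and differ only in their output maps. Writing $\xi^+ = \hat{A}\xi + \hat{B}w$ with outputs $v = \hat{C}_X \xi + \hat{D}_X w$ (equal to $\mathbf{X}w$) and $\eta = \hat{C}_Y \xi + \hat{D}_Y w$ (equal to $\mathbf{Y}w$), I would regard $v$ as the input to $\mathbf{K}$ and eliminate the common signal through $w = \hat{D}_X^{-1}(v - \hat{C}_X \xi)$. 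Substituting back produces a realization of $\mathbf{K} = \mathbf{Y}\mathbf{X}^{-1}$, from $v$ to $\eta$, of the same order as $\hat{A}$, which matches \cref{eq:state-space-K} up to the harmless state sign convention $\xi \mapsto -\xi$ (similarity by $-I$ leaves $\hat{A}$ and $\hat{D}$ fixed but flips the signs of $\hat{B}$ and $\hat{C}$, without changing the transfer function).

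The main obstacle I anticipate is justifying that $\hat{D}_X = \mathbf{X}(\infty)$ is invertible, since the realization \cref{eq:state-space-K} inverts it and $\mathbf{K} = \mathbf{Y}\mathbf{X}^{-1}$ must be proper. I would resolve this using strict properness of $\mathbf{G}$: since $\mathbf{G}(\infty) = 0$ and $\mathbf{G} = \mathbf{M}_l^{-1}\mathbf{N}_l$, the feedthrough in \cref{eq:coprime-realization} satisfies $D_N = 0$, so evaluating the residual at $z = \infty$ gives $D_M \hat{D}_X = I + \mathbf{\Delta}(\infty)$, where $\mathbf{\Delta} = \mathbf{M}_l \mathbf{X} - \mathbf{N}_l \mathbf{Y} - I$. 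Since $\|\mathbf{\Delta}(\infty)\| \le \|\mathbf{\Delta}\|_\infty < 1$, the matrix $I + \mathbf{\Delta}(\infty)$ is invertible, and because $D_M = \mathbf{M}_l(\infty)$ is invertible it follows that $\hat{D}_X$ is invertible. This makes the inverse realization of $\mathbf{X}$, and hence \cref{eq:state-space-K}, well-defined, completing the argument.
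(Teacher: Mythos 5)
Your proposal is correct and follows essentially the same route as the paper: specialize \Cref{theorem:feas_lmi} to $\epsilon=1$, invoke \Cref{theorem:internal_stability_hinf} (with \Cref{theorem:parameterization} supplying the exact-residual converse) to identify the residual bound with internal stability, and obtain \cref{eq:state-space-K} by standard state-space manipulation of $\mathbf{Y}\mathbf{X}^{-1}$ --- your signal-elimination derivation is equivalent, up to the sign similarity $\xi \mapsto -\xi$ you note, to the paper's cascade-plus-similarity computation in Appendix~C. Your argument that $\hat{D}_X$ is invertible (using $D_N=0$ from strict properness of $\mathbf{G}$, invertibility of $D_M$, and $\|\mathbf{\Delta}\|_\infty<1$) is a welcome addition that the paper leaves implicit when it applies the inversion formula.
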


The state-space realization of $ \mathbf{K} = \mathbf{Y}\mathbf{X}^{-1} $~\cref{eq:state-space-K} is based on standard system operations (see, e.g.,~\cite[Chapter 3.6]{zhou1996robust}). We provide a detailed calculation for~\cref{eq:state-space-K} in Appendix C. Note that the state-space realization~\cref{eq:coprime-realization} for $\mathbf{M}_l$ and $\mathbf{N}_l$ can be easily computed under \Cref{assumption:stabilizability} (see Appendix A).  

\begin{remark}[Comparison with Youla/SLP/IOP]  Youla~\cite{youla1976modern}, SLP~\cite{wang2019system}, IOP~\cite{furieri2019input} and \Cref{theorem:parameterization} present equivalent convex parameterizations for $\mathcal{C}_{\mathrm{stab}}$. However, they have very different numerical features in practical computation. The Youla parameter $\mathbf{Q}$ can be freely chosen in $\mathcal{RH}_\infty$, but the resulting controller in~\cref{eq:youla} may not have \textit{a priori} fixed order. The affine constraints in SLP~\cite{wang2019system}, IOP~\cite{furieri2019input} (see \cref{eq:iop}) make their numerical computation non-trivial. The FIR approximation in~\cite{wang2019system,furieri2019input} often leads to controllers of very high order that are impractical to deploy. Furthermore, the FIR approximation may make SLP~\cite{wang2019system} infeasible even for very simple systems; see~\cite{zheng2019systemlevel}. In contrast, the single affine constraint in \Cref{theorem:parameterization} allows for a robust filtering interpretation~\cref{eq:affine_residual} and admits an efficient LMI~\cref{eq:stabilization_LMI}~for~all stabilizing controllers. Moreover, the controller from~\cref{eq:stabilization_LMI} always has the same order as the system state in~\cref{eq:LTI}. \textit{To the best our knowledge, \Cref{theorem:feas_lmi} offers the first efficient LMI among the recent surging interest on frequency-domain characterizations of $\mathcal{C}_{\mathrm{stab}}$~\cite{wang2019system,furieri2019input,zheng2019systemlevel,tseng2021realization,zheng2019equivalence}.}  \hfill $\square$
\end{remark}

\begin{remark}[Comparison with standard LMI for stability]
For internal stability, one can also derive an LMI based on \Cref{lemma:internalstability}. In particular, the following bilinear inequality
   \begingroup
    \setlength\arraycolsep{2pt}
\def\arraystretch{0.9}
\begin{equation} \label{eq:LyapunovLMI}
   \small \begin{bmatrix}
    A+BD_\mK C & BC_\mK\\
  B_\mK C & A_\mK
    \end{bmatrix} P + P\begin{bmatrix}
    A+BD_\mK C & BC_\mK\\
  B_\mK C & A_\mK
    \end{bmatrix}^\tr \prec 0, 
\end{equation}
\endgroup
with $P \succ 0$ can be linearized into an LMI using a nonlinear change of variables~\cite{gahinet1994linear}; see also~\cite[Section 3]{zheng2021analysis} for a recent~revisit. However, the change of variables for~\cref{eq:LyapunovLMI} has a complicated inverse and factorization. Our new controller in~\cref{eq:X_Y_realization_stabilization} is more straightforward (with only inverse on diagonal blocks), which offers benefits in other scenarios, e.g., decentralized control~\cite{rotkowitz2005characterization,furieri2019sparsity}. 
\hfill $\square$
\end{remark}

\subsection{Decentralized stabilization} \label{subsection:decentralized}
 
 One main motivation for the recent surging interests in frequency-domain characterizations of $\mathcal{C}_{\mathrm{stab}}$~\cite{wang2019system,furieri2019input,zheng2019systemlevel,tseng2021realization,zheng2019equivalence} is that one can impose structural constraints on the design parameters that can lead to structural controller constraints, such as a decentralized controller $\mathbf{K}$. Note that imposing convex constraints on $\mathbf{K}$ often leads to intractable synthesis problems~\cite{rotkowitz2005characterization,furieri2019sparsity}, while imposing convex constraints on the new parameters after reparameterization of $\mathcal{C}_{\mathrm{stab}}$ naturally leads to a  convex (but infinitely-dimensional) problem; see, e.g.,~\cite[Section IV]{zheng2019equivalence}.  

Research in decentralized control has remains of great  interest~\cite{bakule2008decentralized}, especially for large-scale interconnected systems.  This aims to design a decentralized  controller based on local measurements for each subsystem to regulate the global behavior. 
In our LMI computation~\cref{eq:stabilization_LMI}, structural constraints on $\mathbf{X}(z)$ and $\mathbf{Y}(z)$ may be enforced by constraining the decision~variables $Z$, $Q$, $F$, $L_X$, $L_Y$,
  $R_X$, and $R_Y$. In particular, if all these variables have a block-diagonal (decentralized) structure
  then $\mathbf{X}(z)$ and $\mathbf{Y}(z)$ also have the same block-diagonal
  structure, hence $\mathbf{K}(z) = \mathbf{Y}(z) \mathbf{X}^{-1}(z)$ will be block diagonal (decentralized), so is the state-space realization in~\cref{eq:state-space-K}. Note that imposing a block diagonal constraint on the Youla parameter $\mathbf{Q}$ does not lead to a decentralized controller $\mathbf{K}$ (see~\cite{rotkowitz2005characterization,furieri2019sparsity} more discussions on constraints for $\mathbf{Q}$).  %Note that this is in contrast to Youla parameterization 

\section{Numerical Experiments} \label{section:experiments}

In this section, we consider a discrete-time LTI system that consists of $n$ subsystems interacting over a chain graph (see \Cref{figure:LineGraph}) to illustrate the performance of our LMI-based computation in \Cref{theorem:feas_lmi} and \Cref{corollary:stability}. We used YALMIP~\cite{lofberg2004yalmip} together with the solver MOSEK~\cite{andersen2000mosek} to solve the optimization problems in our numerical experiments.

\subsection{Example setup}
Similar to~\cite{zheng2017scalable},  
we assume the dynamics of each node $x_i$ are an unstable second-order system coupled with its neighbouring nodes through an exponentially decaying function~as
%\begin{equation}
\begin{align}
    x_i[t+1] &= \begin{bmatrix}
    1 & 1 \\ -1 & 2
    \end{bmatrix}x_i[t] + \sum_{j \in \mathcal{N}_i} \alpha(i,j) x_j[t] + \begin{bmatrix}
    0\\1
    \end{bmatrix} u_i[t], \nonumber\\
    y_i[t] & = \begin{bmatrix} 0 & 1 \end{bmatrix}x_i[t], \label{eq:chain-dynamics}
\end{align}
%\end{equation}
where $\alpha(i,j) = \frac{1}{5}e^{-(i-j)^2}$, $\mathcal{N}_i = \{i-1, i+1\} \cap \{1, \ldots, n\}$ and $i = 1, \ldots, n$. Our goal is to design a decentralized dynamical controller for each subsystem $i$ based on its own measurement $\mathbf{u}_i = \mathbf{K}_i \mathbf{y}_i$ to stabilize the global system. 

We first get a doubly coprime factorization of this system by the standard pole placement method in which the closed-loop poles were chosen randomly from $-0.5$ to $0.5$ (see Appendix A for the computation of a doubly coprime factorization). As discussed in \Cref{subsection:decentralized}, we can constrain the decision variables $Z$, $Q$, $F$, $L_X$, $L_Y$,  $R_X$, and $R_Y$ to be block diagonal with dimensions consistent with each subsystem. This leads to block diagonal $\mathbf{X}(z)$ and $\mathbf{Y}(z)$, and thus results in a desired decentralized controller. In particular, we solved the following optimization problem\footnote{See our code at { \url{https://github.com/soc-ucsd/iop_lmi}}.}
  \begin{equation} \label{eq:lmi-decentralized}
      \begin{aligned}
      \min \quad &  h(Q,F, L_X,L_Y, R_X, R_Y)  \\
      \mathrm{subject~to} \quad & \cref{eq:stabilization_LMI}, \\
       & X \succ 0,\; \; Z, Q, F \; \text{block diagonal}, \\
      & L_X, L_Y, R_X, R_Y \; \text{block diagonal},
      \end{aligned}
  \end{equation}
where we chose $\epsilon = 1$ in \cref{eq:stabilization_LMI} to guarantee stability and the cost function $h(R,F, L_X,L_Y, R_X, R_Y) :=  \|Q\|_\infty + \|F\|_\infty+ \|L_X\|_\infty+ \|L_Y\|_\infty+ \|R_X\|_\infty+ \|R_Y\|_\infty$ with $\|V\|_\infty := \max_{ij}|V_{ij}|$ is to regularize the size of the controller realization. For the comparison of numerical efficiency, we also solved a centralized $\mathcal{H}_2$ optimal control problem using the SLP~\cite{wang2019system} according to the setup in~\cite[Section 7]{zheng2019systemlevel}, where the standard FIR approximation was used in numerical computation\footnote{We used the implementations of closed-loop parameterizations \cite{zheng2019systemlevel} (including SLP) at \url{https://github.com/soc-ucsd/h2_clp}.}.

\subsection{Numerical results and computational efficiency}

    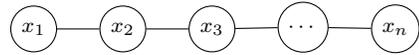
\begin{figure}[t]
	\centering
	\footnotesize
	\begin{tikzpicture}
	\matrix (m) [matrix of nodes,row sep = 0em,	column sep = 2em, nodes={circle, draw=black}] at (0,0) {$x_1$ & $x_2$ & $x_3$ & $\cdots$  & $x_n$\\};
	\draw (m-1-1) edge (m-1-2);
		\draw (m-1-2) edge (m-1-3);
			\draw (m-1-3) edge (m-1-4);
				\draw (m-1-4) edge (m-1-5);
	\end{tikzpicture}
	\caption{A dynamical system interacting over a chain graph, where the dynamics of each subsystem $x_i$ only depend on its neighbors $x_{i-1}, x_{i+1}$. }
	\label{figure:LineGraph}
\end{figure}

\begin{figure}[t]
\setlength{\belowcaptionskip}{0pt}
\setlength{\abovecaptionskip}{0pt}
    \centering
     \subfigure[]{\includegraphics[scale = 0.6]{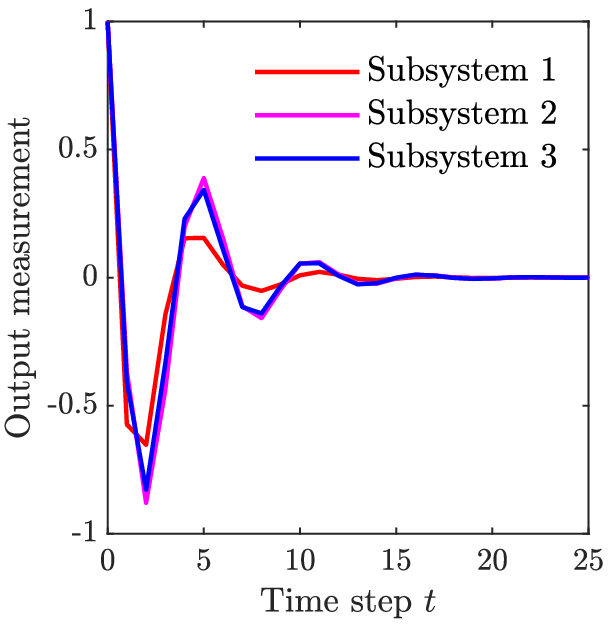}} \hspace{5mm}
     \subfigure[]{\includegraphics[scale = 0.6]{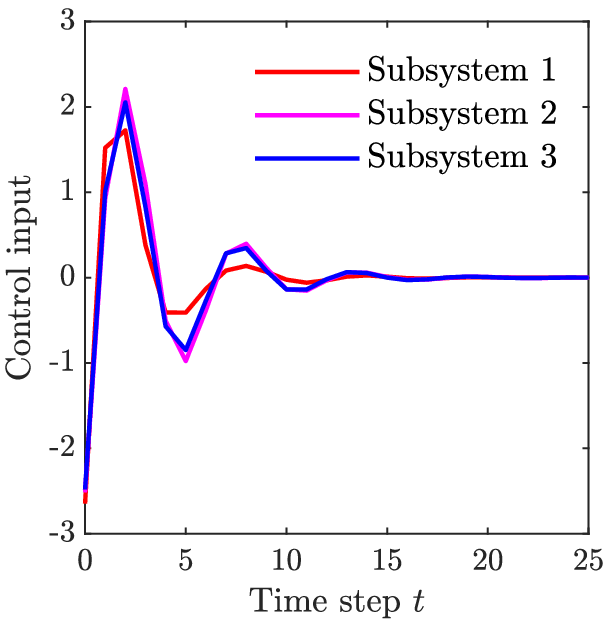}}
    \caption{Responses of~\cref{eq:chain-dynamics} with three subsystems $n = 3$. The decentralized controller $\mathbf{u}_i = \mathbf{K}_i\mathbf{y}_i$, shown in~\cref{eq:decentralized-controller-lmi}, was computed via solving~\cref{eq:lmi-decentralized}. (a) Output measurement $y_i[t]$; (b) Input $u_i[t]$. }
    \label{fig:response-lmi}
\end{figure}
We first consider an LTI system~\cref{eq:chain-dynamics} with $n = 3$ subsystems. For this small system, it took less than half a second to solve~\cref{eq:lmi-decentralized}, resulting in the following decentralized controller
\begin{equation} \label{eq:decentralized-controller-lmi}
    \begin{aligned}
    \mathbf{K}_1 &= \frac{-2.647 z^2 - 0.04603 z - 0.02581}{z^2 + 0.01875 z + 0.009845},\\
    \mathbf{K}_2 &= \frac{ -2.515 z^2 + 0.1379 z - 0.09867}{z^2 - 0.05334 z + 0.03937},\\
    \mathbf{K}_3 &= \frac{ -2.481 z^2 + 0.1326 z - 0.06773}{z^2 - 0.05207 z + 0.02741}.
    \end{aligned}
\end{equation}
The order of each local controller $\mathbf{u}_i = \mathbf{K}_i\mathbf{y}_i$ is guaranteed to be the same with the state dimension of each subsystem (which is two in this case). \Cref{fig:response-lmi} shows the the responses of input $u_i[t]$ and output $y_i[t]$  when the initial state was $x_i[0] = \begin{bmatrix} 0,\; 1 \end{bmatrix}^\tr, i = 1, 2, 3$. As expected, the decentralized controller from~\cref{eq:lmi-decentralized} stabilizes the global system~\cref{eq:chain-dynamics}.  

For comparison, we computed a centralized $\mathcal{H}_2$ optimal controller via SLP~\cite{wang2019system} according to~\cite[Section 7]{zheng2019systemlevel}. This~SLP problem is infinite dimensional, and we used a standard FIR approximation for computation. \Cref{fig:response-sls} (a) and (b) demonstrate the closed-loop responses using the resulting centralized dynamic controller when the FIR length was $10$ and $20$, respectively. Note that the FIR approximation always leads to a dynamical controller of high order (which scales linearly with respect to the FIR length): in particular, the order of the controller with FIR length $10$ is $84$ and the order of  the controller with FIR length $20$  is $174$. In contrast, our LMI-based computation in \Cref{theorem:feas_lmi} and \Cref{corollary:stability} guarantees that the order of the resulting controller will be the same as the order of the system. %The order of each decentralized controller in~\cref{eq:decentralized-controller-lmi} is two.  % that is two in this case. 

Moreover, it is known that the computational efficiency of the FIR approximation does not scale well with system dimension, as it leads to optimization problems of very large size. To illustrate this, we varied the number of subsystems from 6 to 14 in~\cref{eq:chain-dynamics} and allow  each subsystem to use its own state (i.e., $y_i[t] = x_i[t]$). \Cref{tab:time-consumption} lists the time consumption for solving~\cref{eq:lmi-decentralized} and the SLP problem with FIR length $20$. It is clear that our LMI-based computation is much more scalable. For the case $n = 14$, our LMI was two orders of magnitude faster to solve. Finally, as shown in \Cref{tab:order}, the order of the dynamic controller \cref{eq:lmi-decentralized} is always two whereas the order of the controller from the SLP increases dramatically, and is of order $1092$ when $n = 14$, which is unpractical for deployment.

%{\color{red} a short paragraph }

\begin{figure}[t]
\setlength{\belowcaptionskip}{0pt}
\setlength{\abovecaptionskip}{3pt}
    \centering
     %\subfigure[]
     {\includegraphics[scale = 0.6]{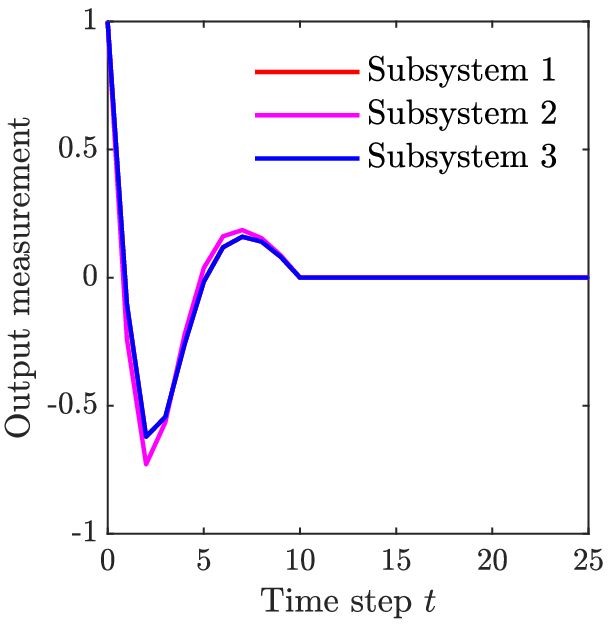}} \hspace{5mm}
     %\subfigure[]
     {\includegraphics[scale = 0.6]{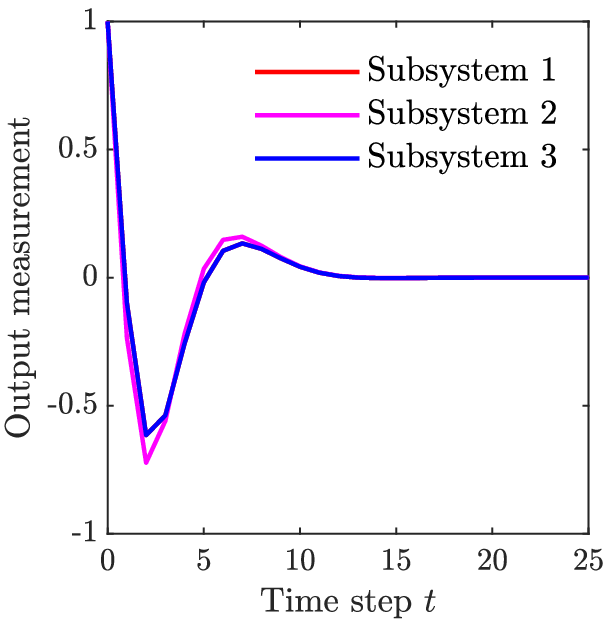}}
    \caption{Responses of~\cref{eq:chain-dynamics} with three subsystems $n = 3$ under a centralized $\mathcal{H}_2$ optimal controller computed via SLP and FIR approximation. (Left) FIR length equals to $10$; (Right) FIR length equals to $20$. }
    \label{fig:response-sls}
\end{figure}

% \begin{table}[t]
% %\setlength{\belowcaptionskip}{0pt}
% %\setlength{\abovecaptionskip}{0pt}
% \caption{Time (in seconds), including YALMIP time and MOSEK time) and controller order %for computing dynamical controllers 
% using~\cref{eq:lmi-decentralized} and SLP + FIR approximation (length 20). }\label{tab:time-consumption}
% {\small
% 	\centering
% 	\begin{tabular}{ccrrrrr}
% 	\toprule
% 	  \multicolumn{2}{c}{Number of nodes $n$ } & 6  &  8 &   10&    12 & 14\\ % &  100 \\
% 		\midrule
% 		%\hline
% \multirow{2}{*}{LMI~\cref{eq:lmi-decentralized}} & Time &  0.49 &   0.60 &    0.75 &    0.99 &    1.28
%     \\
%     	 & Order &  2 &   2 &    2 &    2 &   2
%     \\
% 	\multirow{2}{*}{	SLP + FIR }& Time &    3.22   & 8.60 &  22.68 &   53.19&  132.87
%  \\
%   & Order & 468 & 624  &   780  & 936   &    1092
%     \\
% 		%SLP + FIR $T=40$ & 12.20 & 15.60 &  ** & ** & ** \\
% 	\bottomrule
% 	\end{tabular}
% 	}
% \end{table}

\begin{table}[t!]
\setlength{\belowcaptionskip}{0pt}
\setlength{\abovecaptionskip}{0pt}
\centering
\caption{Time (in seconds) 
%for computing dynamical controllers 
for~\cref{eq:lmi-decentralized} and SLP + FIR (length 20). Includes YALMIP time and MOSEK time.}
\label{tab:time-consumption}
{\small
	\centering
	\begin{tabular}{crrrrr}
	\toprule
	  \# of nodes $n$ & 6  &  8 &   10&    12 & 14\\ % &  100 \\
		\midrule
		%\hline
     LMI~\cref{eq:lmi-decentralized} &  0.49 &   0.60 &    0.75 &    0.99 &    1.28
    \\
	SLP + FIR&    3.22   & 8.60 &  22.68 &   53.19&  132.87
    \\
		%SLP + FIR $T=40$ & 12.20 & 15.60 &  ** & ** & ** \\
	\bottomrule
	\end{tabular}
	}
\end{table}

\begin{table}[t!]
\setlength{\belowcaptionskip}{0pt}
\setlength{\abovecaptionskip}{0pt}
\centering
\caption{Controller order %for computing dynamical controllers 
for~\cref{eq:lmi-decentralized} and SLP + FIR (length 20). }
\label{tab:order}
{\small
	\centering
	\begin{tabular}{crrrrr}
	\toprule
	  \# of nodes $n$ & 6  &  8 &   10&    12 & 14\\ % &  100 \\
		\midrule
		%\hline
      LMI~\cref{eq:lmi-decentralized} &  2 &   2 &    2 &    2 &   2
    \\
	  SLP + FIR & 468 & 624  &   780  & 936   &    1092
    \\
		%SLP + FIR $T=40$ & 12.20 & 15.60 &  ** & ** & ** \\
	\bottomrule
	\end{tabular}
	}
\end{table}

% {\color{red}
% Some simulation

% \subsection{Example 1: Comparison with SLP/IOP}

% Academic example for stabilization;  and comparison with FIR-based solutions - SLP and IOP- time consumption, optimization problem size, the order of the final controllers?

% \subsection{Example 2: Mixed traffic control}

% Demonstrate decentralized stabilization

% %consider a simplified LCC~\cite{wang2021leading}, similar to~\cite{zheng2019system} but with more vehicles. 
% }

\balance
\section{Conclusions} \label{section:conclusion}

In this paper, we have presented a kernel version of the Youla parameterization for stabilizing controllers $\mathcal{C}_{\mathrm{stab}}$. This parameterization only involves a single affine constraint, which can be viewed as a novel robust filtering problem. This filtering perspective leads to the first efficient LMI characterization for the frequency-domain characterization of $\mathcal{C}_{\mathrm{stab}}$. Our LMI characterization offers significant advantages compared to the existing parameterizations (SLP~\cite{wang2019system}, IOP~\cite{furieri2019input}, and the mixed versions~\cite{zheng2019systemlevel}) in terms of both computation and implementation. Ongoing research directions include investigations on LMIs for performance specifications under our new controller parameterization.

\bibliographystyle{IEEEtran}
\bibliography{references}

\onecolumn
\section*{Appendix}

\subsection{State-space realization of the coprime factorization} \label{app:coprime-factorization}

It is straightforward to find a doubly coprime factorization for $\mathbf{G}(z)$ given a stabilizable and detectable state-space realization~\cite[Theorem 5.9]{zhou1996robust}. This amounts to find a stabilizing feedback gain and observer gain. 

\vspace{1mm}

    \begin{theorem}
    Suppose $\mathbf{G}(s)$ is a proper real-rational matrix and 
    $
        \mathbf{G} = \left[ \begin{array}{c|c} A & B \\
        \hline
                                     C & D \end{array} \right]
    $
    is a stabilizable and detectable state-space realization. Let $F$ and $L$ be such that $A + BF$ and $A + LC$ are both stable. Then, a doubly co-prime factorization of $\mathbf{G}$ is
    \begin{equation} \label{eq:state-space-coprime}
        \begin{aligned}
            \begin{bmatrix} \mathbf{M}_r & \mathbf{V}_r \\ \mathbf{N}_r & \mathbf{U}_r\end{bmatrix} = \left[ \begin{array}{c|cc} A+BF & B & L \\
        \hline
                                     F & I & 0\\
                                     C+DF & D & I\end{array} \right], \quad 
            \begin{bmatrix} \mathbf{U}_l & -\mathbf{V}_l \\ -\mathbf{N}_l & \mathbf{M}_l\end{bmatrix}  = \left[ \begin{array}{c|cc} A+LC & -(B+LD) & L \\
        \hline
                                     F & I & 0\\
                                     C & -D & I\end{array} \right]. 
        \end{aligned}
    \end{equation}
    \end{theorem}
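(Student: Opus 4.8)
The plan is to verify directly the three defining properties of a doubly-coprime factorization: that all eight transfer matrices lie in $\mathcal{RH}_\infty$, that the right and left factorizations reproduce $\mathbf{G}$, i.e. $\mathbf{G} = \mathbf{N}_r\mathbf{M}_r^{-1} = \mathbf{M}_l^{-1}\mathbf{N}_l$, and that the Bezout identity~\cref{eq:coprime} holds. Stability is immediate: every block of the right array shares the state matrix $A+BF$ and every block of the left array shares $A+LC$, both of which are stable by the choice of $F$ and $L$, so each entry is in $\mathcal{RH}_\infty$.

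For the factorization property I would first invert $\mathbf{M}_r = \left[\begin{array}{c|c} A+BF & B \\ \hline F & I\end{array}\right]$ with the standard state-space inversion formula (legitimate since the feedthrough $I$ is invertible), obtaining $\mathbf{M}_r^{-1} = \left[\begin{array}{c|c} A & B \\ \hline -F & I\end{array}\right]$. Forming the cascade $\mathbf{N}_r\mathbf{M}_r^{-1}$ via the series-connection rule gives a realization of state dimension $2n$ with state matrix $\begin{bmatrix} A+BF & -BF \\ 0 & A\end{bmatrix}$; the similarity transformation $\begin{bmatrix} I & I \\ 0 & I\end{bmatrix}$ block-diagonalizes this into $\mathrm{diag}(A+BF,\,A)$ and renders the $A+BF$ copy uncontrollable, so the realization collapses to $\left[\begin{array}{c|c} A & B \\ \hline C & D\end{array}\right] = \mathbf{G}$. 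The mirror-image argument (inverting $\mathbf{M}_l$ and cascading $\mathbf{M}_l^{-1}\mathbf{N}_l$) yields $\mathbf{G} = \mathbf{M}_l^{-1}\mathbf{N}_l$.

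The substantive step is the Bezout identity. I would assemble the state-space realization of the product of the two $2\times 2$ arrays, producing a block upper-triangular state matrix $\mathcal{A} = \begin{bmatrix} A+LC & LC-BF \\ 0 & A+BF\end{bmatrix}$ of size $2n$ (the off-diagonal block telescoping from $-(B+LD)F + L(C+DF) = LC - BF$), together with the induced input, output, and feedthrough matrices. A direct block multiplication shows the feedthrough term equals exactly $I$, so it remains to prove that the strictly-proper part vanishes identically. I would then apply the coordinate change $T = \begin{bmatrix} I & -I \\ 0 & I\end{bmatrix}$, effectively tracking the difference of the two state copies: this block-diagonalizes $\mathcal{A}$ into $\mathrm{diag}(A+LC,\,A+BF)$ and, with the correct signs, annihilates one block-row of the transformed input matrix while the transformed output matrix already carries a vanishing block-column. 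One copy of the state is thus uncontrollable and the other unobservable; since $\mathcal{A}$ is block-diagonal with no coupling, both decoupled copies contribute nothing, leaving the product equal to the feedthrough $I$, which is~\cref{eq:coprime}.

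The main obstacle is precisely this last step: the bookkeeping over $2n$ states and, above all, the sign tracking, since the cancellation hinges on the combinations $B+LD$, $C+DF$, and $LC-BF$ telescoping so that after the transformation $T$ the reachable and observable subspaces become disjoint. Identifying the correct coordinate change (the difference of the two state copies) is the crux; once it is in hand the verification is a routine block computation. I would pay particular attention to the sign of the $L$-column in the right array, as an incorrect sign leaves a nonzero residual $\propto F(zI-A-LC)^{-1}L$ in the off-diagonal block and destroys the identity.
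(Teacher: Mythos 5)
Your strategy---direct state-space verification of stability, of $\mathbf{G}=\mathbf{N}_r\mathbf{M}_r^{-1}=\mathbf{M}_l^{-1}\mathbf{N}_l$ via inversion, cascade, and removal of uncontrollable/unobservable modes, and of the Bezout identity via a product realization plus a decoupling coordinate change---is sound, and it is precisely the ``detailed computation'' the paper does not carry out: the paper's own proof consists of the assertion that the choices ``can be directly verified'' (with a citation to Nett et al.), followed only by the feedback-interpretation derivation of $\mathbf{G}=\mathbf{N}_r\mathbf{M}_r^{-1}$ through the substitution $v[t]=u[t]-Fx[t]$. Your proposal is therefore strictly more complete than what is written; the individual steps you describe (the inverse of $\mathbf{M}_r$, the cascade state matrix $\bigl[\begin{smallmatrix} A+BF & -BF\\ 0 & A\end{smallmatrix}\bigr]$, the telescoping $-(B+LD)F+L(C+DF)=LC-BF$, and the feedthrough of the product equaling $I$) all check out.

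However, if you execute your Bezout computation on the arrays exactly as printed, it does not close---and the obstruction is exactly the one you flagged. The product realization has state matrix $\bigl[\begin{smallmatrix} A+LC & LC-BF\\ 0 & A+BF\end{smallmatrix}\bigr]$, input matrix $\bigl[\begin{smallmatrix} -B & L\\ B & L\end{smallmatrix}\bigr]$, output matrix $\bigl[\begin{smallmatrix} F & F\\ C & C\end{smallmatrix}\bigr]$, and feedthrough $I$. The coordinate change that block-diagonalizes the state matrix (in the convention $\tilde{A}=TAT^{-1}$ it is $T=\bigl[\begin{smallmatrix} I & I\\ 0 & I\end{smallmatrix}\bigr]$, the \emph{sum} of the two copies rather than the difference) does make the $A+BF$ copy unobservable, but the first block-row of the transformed input matrix becomes $\begin{bmatrix} 0 & 2L\end{bmatrix}$ rather than zero, so the $A+LC$ copy remains both reachable and observed; the product evaluates to $I+\bigl[\begin{smallmatrix} 0 & 2F(zI-A-LC)^{-1}L\\ 0 & 2C(zI-A-LC)^{-1}L\end{smallmatrix}\bigr]\neq I$. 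A scalar check with $A=0$, $B=C=1$, $D=0$, $F=L=-\tfrac12$ gives $\mathbf{U}_l\mathbf{V}_r-\mathbf{V}_l\mathbf{U}_r=\tfrac{1/2}{z+1/2}\neq 0$. The identity does hold once the last input column of the right-hand array is $-L$ instead of $L$ (this is the form in Zhou--Doyle--Glover, Theorem~5.9): then the transformed input matrix's first block-row vanishes entirely, the two state copies become respectively uncontrollable and unobservable, and the product collapses to the feedthrough $I$. In short, your method is correct, your warning about the sign of the $L$-column is not hypothetical---the statement as printed carries that sign error---and your proof goes through verbatim after replacing $L$ by $-L$ in the $(\mathbf{V}_r,\mathbf{U}_r)$ column.
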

    \vspace{2mm}

    We can directly verify that the choices in~\cref{eq:state-space-coprime} satisfy \Cref{definition:coprime} (see~\cite{nett1984connection} for detailed computations). The coprime factorization of a transfer matrix in~\cref{eq:state-space-coprime} has a feedback control interpretation~\cite[Remark 5.3]{zhou1996robust}. For example, the right coprime factorization comes out naturally from changing the control variable by a state feedback. 
    
    Consider the state-space model
$$
    \begin{aligned}
        x[t+1] &= Ax[t] + Bu[t], \\
        y[t] & = C x[t] + Du[t].
    \end{aligned}
$$
Introduce a state feedback and change the variable
$
    v[t]:= u[t] - Fx[t]
$
where $F$ makes $A + BF$ stable. We then get 
$$
\begin{aligned}
        x[t+1] &= (A + BF)x[t] + Bv[t], \\
        u[t] &= Fx[t] + v[t] \\
        y[t] &= (C+DF)x[t] + Dv[t].
\end{aligned}
$$
From these equations, it is easy to see that the transfer matrix from $\mathbf{v}$ to $\mathbf{u}$ is 
 $$
        \mathbf{M}_r(z) = \left[ \begin{array}{c|c} A+BF & B \\
        \hline
                                     F & I \end{array} \right],
    $$
and that the transfer matrix from $\mathbf{v}$ to $\mathbf{y}$ is 
$$
        \mathbf{N}_r(z) = \left[ \begin{array}{c|c} A+BF & B \\
        \hline
                                     C+DF & D \end{array} \right].
    $$
Therefore, we have 
$
    \mathbf{u} = \mathbf{M}_r\mathbf{v}, \, \mathbf{y} = \mathbf{N}_r\mathbf{v},
$
so that $\mathbf{y} = \mathbf{N}_r\mathbf{M}_r^{-1}\mathbf{u}$, \emph{i.e.}, $\mathbf{G} = \mathbf{N}_r\mathbf{M}_r^{-1}$. %Similar interpretation on the left coprime factors  $\mathbf{N}_l, \mathbf{M}_l$ can be given by a stable observer gain $L$; see.

\subsection{Computation of~\cref{eq:transformation}}  \label{app:auxiliary_computation_primal}
Here, we provide some detailed computation for~\cref{eq:transformation}.  For notational convenience, we highlight $ \blue{\hat{A}}, \blue{\hat{B}}, \blue{\hat{C}}, \blue{\hat{D}}$ in blue.  
\begin{itemize}
    \item For~\cref{eq:transformation_a}, we can verify 
\begin{equation*} %\label{eq:computation_a}
    \begin{bmatrix}
    I & 0 \\ 0 & -V^\tr N
    \end{bmatrix}^\tr  \begin{bmatrix} X & U \\ U^\tr & \hat{X} \end{bmatrix}  \begin{bmatrix}
    I & 0 \\ 0 & -V^\tr N
    \end{bmatrix} = \begin{bmatrix}
    I & 0 \\ 0 & -NV
    \end{bmatrix}\begin{bmatrix} X & -UV^\tr N \\ U^\tr & -\hat{X}V^\tr N \end{bmatrix} = \begin{bmatrix} X & -UV^\tr N \\ -NVU^\tr & NV\hat{X}V^\tr N \end{bmatrix} 
\end{equation*}
Since $\tilde{P}\tilde{P}^{-1} = I$, we have 
\begin{equation} \label{eq:Uinverse}
    U^\tr Y + \hat{X}V^\tr = 0 \quad \Rightarrow \quad -U^\tr = \hat{X}V^\tr N.
\end{equation}
Combining $Z = -UV^\tr N$ with the two equations above leads to~\cref{eq:transformation_a}.  

\item For~\cref{eq:transformation_b}, we have 
\begin{equation} \label{eq:computation_b}
\begin{aligned}
    \tilde{T}^\tr \tilde{A} \tilde{P} \tilde{T} &=  \begin{bmatrix}
    I & 0 \\ 0 & -V^\tr N
    \end{bmatrix}^\tr \begin{bmatrix} A & B_1 \blue{\hat{C}} \\
        0 &\blue{\hat{A}} \end{bmatrix}\begin{bmatrix} X & -UV^\tr N \\ U^\tr & -\hat{X}V^\tr N \end{bmatrix} \\
        & = \begin{bmatrix} A & B_1 \blue{\hat{C}} \\
        0 &  - NV\blue{\hat{A}} \end{bmatrix}\begin{bmatrix} X & -UV^\tr N \\ U^\tr & -\hat{X}V^\tr N \end{bmatrix} \\
        &= \begin{bmatrix} AX+B_1 \blue{\hat{C}}U^\tr & -AUV^\tr N -B_1 \blue{\hat{C}}\hat{X}V^\tr N \\ - NV\blue{\hat{A}}U^\tr & NV\blue{\hat{A}}\hat{X}V^\tr N \end{bmatrix}.
\end{aligned}
\end{equation}
Considering~\cref{eq:Uinverse}, we have $NV\blue{\hat{A}}\hat{X}V^\tr N = - NV\blue{\hat{A}}U^\tr $. Also, the change of variables in~\cref{eq:change_of_variables} reads as
$$
    \begin{bmatrix}
      Q & F \\
      L & R
    \end{bmatrix}
     :=
    \begin{bmatrix}
      - N V \blue{\hat{A}}  U^\tr & - N V \blue{\hat{B}} \\
      \blue{\hat{C}} U^\tr & \blue{\hat{D}}
    \end{bmatrix},
$$
Combining~\cref{eq:change_of_variables},~\cref{eq:Uinverse} with~\cref{eq:computation_b} leads to~\cref{eq:transformation_b}.   

\item For~\cref{eq:transformation_c}, we have 
$$
    \begin{bmatrix}
    I & 0 \\ 0 & -V^\tr N
    \end{bmatrix}^\tr \begin{bmatrix} B_1 \blue{\hat{D}} - B_2 \\
       \blue{\hat{B}} \end{bmatrix} = \begin{bmatrix}
    I & 0 \\ 0 & -NV
    \end{bmatrix} \begin{bmatrix} B_1 \blue{\hat{D}} - B_2 \\
       -NV\blue{\hat{B}} \end{bmatrix}.
$$
\item For~\cref{eq:transformation_d}, we have 
$$
    \tilde{C} \tilde{P} \tilde{T} = \begin{bmatrix}  C & D_1 \blue{\hat{C}} \end{bmatrix} \begin{bmatrix} X & -UV^\tr N \\ U^\tr & -\hat{X}V^\tr N \end{bmatrix}  = \begin{bmatrix}
    CX + D_1 \blue{\hat{C}}U^\tr & -CUV^\tr N - D_1 \blue{\hat{C}} \hat{X}V^\tr N
    \end{bmatrix}
$$
Combining $Z = -UV^\tr N$ and~\cref{eq:Uinverse} with the equation above leads to~\cref{eq:transformation_d}.   
\end{itemize}

\subsection{Proof of~\Cref{corollary:stability}}

The first part of \Cref{corollary:stability} is immediate. Given $\mathbf{X}$ and $\mathbf{Y}$ in~\cref{eq:X_Y_realization_stabilization}, we prove the following state space realization  
   \begin{equation} \label{eq:state-space-K-app}
        \mathbf{K} = \mathbf{Y}\mathbf{X}^{-1} = \left [
      %\hspace{-.2ex}
      \begin{array}{c|c}
        \hat{A} - \hat{B}\hat{D}_X^{-1}\hat{C}_X & -\hat{B}\hat{D}_X^{-1} \\  \hline 
       -\hat{C}_Y+ \hat{D}_Y \hat{D}_X^{-1}\hat{C}_X & \hat{D}_Y\hat{D}_X^{-1}
      \end{array}
      \hspace{-.2ex}
      \right ].
    \end{equation}
The proof is based on a few standard system operations. We recall some of them below  (see~\cite[Chapter 3.6]{zhou1996robust} for more discussions). Consider two dynamical systems $$
     \mathbf{G}_i = \left[\begin{array}{c|c} A_i & B_i \\ \hline
    C_i & D_i\end{array}\right], \quad i = 1, 2.
$$
Their inverses are given by 
$$
    \mathbf{G}_i^{-1} = \left[\begin{array}{c|c} A_i - B_iD_i^{-1}C_i & -B_iD_i^{-1} \\ \hline
    D_i^{-1}C_i & D_i^{-1}\end{array}\right], i = 1, 2
$$
where we assume $D_i$ is invertible. %If the system is strictly proper, then the inverse will be non-proper and there is no state-space realization. 
The cascade connection of two systems such that $\mathbf{y} = \mathbf{G}_1\mathbf{G}_2\mathbf{u}$ has a state-space realization 
\begin{equation} \label{eq:product}
    \mathbf{G}_1\mathbf{G}_2 = \left[\begin{array}{c c|c} A_1 & B_1C_2 & B_1D_2 \\
    0 & A_2 & B_2 \\ \hline
    C_1 & D_1C_2 & D_1D_2 \end{array}\right]. 
\end{equation}
Note that~\cref{eq:product} is in general not  minimal (there may be uncontrollable and observable modes). For example, when $\mathbf{G}_1 = \mathbf{G}_2^{-1}$, we have $\mathbf{G}_1\mathbf{G}_2 = I$.   
For any invertible matrix $T$ with compatible dimension,  we have 
\begin{equation} \label{eq:Transformation}
     \mathbf{G}_i = \left[\begin{array}{c|c} A_i & B_i \\ \hline
    C_i & D_i\end{array}\right] =  \left[\begin{array}{c|c} TA_iT^{-1} & TB_i \\ \hline
    C_iT^{-1} & D_i\end{array}\right], i = 1, 2. 
\end{equation}

Now, consider the state-space realization of  $\mathbf{X}$ and $\mathbf{Y}$ in~\cref{eq:X_Y_realization_stabilization}, we have 
$$
    \mathbf{X}^{-1} = \left[\begin{array}{c|c} \hat{A} - \hat{B}\hat{D}_X^{-1}\hat{C}_X & -\hat{B}\hat{D}_X^{-1} \\ \hline
    \hat{D}_X^{-1}\hat{C}_X & \hat{D}_X^{-1}\end{array}\right],
$$
and 
$$
\begin{aligned}
\mathbf{Y}\mathbf{X}^{-1} &= 
    \left[\begin{array}{c|c} \hat{A} & \hat{B} \\ \hline
                           \hat{C}_Y & \hat{D}_Y\end{array}\right]
    \left[\begin{array}{c|c} 
    \hat{A} - \hat{B}\hat{D}_X^{-1}\hat{C}_X & -\hat{B}\hat{D}_X^{-1} \\ \hline
    \hat{D}_X^{-1}\hat{C}_X                  & \hat{D}_X^{-1}\end{array}\right]\\
    &=
    \left[\begin{array}{c c|c} 
    \hat{A} &  \hat{B}\hat{D}_X^{-1}\hat{C}_X & \hat{B}\hat{D}_X^{-1} \\
    0 & \hat{A} - \hat{B}\hat{D}_X^{-1}\hat{C}_X & -\hat{B}\hat{D}_X^{-1}\\ \hline
    \hat{C}_Y & \hat{D}_Y\hat{D}_X^{-1}\hat{C}_X & \hat{D}_Y\hat{D}_X^{-1} \end{array}\right].
\end{aligned}
$$
From~\cref{eq:Transformation}, upon defining a transformation 
\begin{equation*} %\label{eq:tranT}
    T = \begin{bmatrix}
        I & I \\
        0 & I
    \end{bmatrix}, \qquad  T^{-1} = \begin{bmatrix}
        I & -I \\
        0 & I
    \end{bmatrix}
\end{equation*}
with compatible dimension, we have 
$$
\mathbf{Y}\mathbf{X}^{-1}  =
    \left[\begin{array}{c c|c} 
    \hat{A} &  0 & 0 \\
    0 & \hat{A} - \hat{B}\hat{D}_X^{-1}\hat{C}_X & -\hat{B}\hat{D}_X^{-1}\\ \hline
    \hat{C}_Y & \hat{D}_Y\hat{D}_X^{-1}\hat{C}_X -\hat{C}_Y & \hat{D}_Y\hat{D}_X^{-1} \end{array}\right] =\left [
      %\hspace{-.2ex}
      \begin{array}{c|c}
        \hat{A} - \hat{B}\hat{D}_X^{-1}\hat{C}_X & -\hat{B}\hat{D}_X^{-1} \\  \hline 
       -\hat{C}_Y+ \hat{D}_Y \hat{D}_X^{-1}\hat{C}_X & \hat{D}_Y\hat{D}_X^{-1}
      \end{array}
      \hspace{-.2ex}
      \right ].
$$
This completes the proof of~\cref{eq:state-space-K-app}.

\subsection{Extended LMI formulation}
We use another $\mathcal{H}_\infty$ lemma from~\cite{de2002extended} to derive a new LMI for solving the right robust filtering problem.

\begin{lemma}[{\!\!\cite{de2002extended}}] \label{lemma:Hinf_new}
    Given a stable transfer function $\mathbf{G}(z) = C(zI - A)^{-1}B + D \in \mathcal{RH}_\infty$, then $\|\mathbf{G}(z) \|^2_\infty < \mu$ if and only if there exist a positive definite matrix $P$ and a matrix $G$ such that 
    \begin{equation} \label{eq:HinfLMI-new}
        \begin{bmatrix} P & AG & B & 0 \\ GA^\tr & G+G^\tr - P & 0 & GC^\tr  \\ B^\tr & 0 & I & D^\tr \\ 0 & CG & D & \mu I \end{bmatrix} \succ 0.
    \end{equation}
\end{lemma}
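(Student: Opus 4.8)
The plan is to reduce the dilated characterization \eqref{eq:HinfLMI-new} to the standard $\mathcal{H}_\infty$ LMI of \Cref{lemma:Hinf}, which already encodes $\|\mathbf{G}(z)\|^2_\infty < \mu$. Since that equivalence is granted by \Cref{lemma:Hinf}, the entire content of the lemma becomes the equivalence between the inequality \eqref{eq:HinfLMI-new} carrying the slack variable $G$ and the undilated inequality \eqref{eq:HinfLMI} in $P$ alone; I would prove this equivalence in two directions.

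For the easy direction, I would start from a $P \succ 0$ satisfying \eqref{eq:HinfLMI} and simply set $G = P$. Substituting $G = P$ into \eqref{eq:HinfLMI-new} turns the $(2,2)$ block $G + G^\tr - P$ into $P$ and each occurrence of $G$ in the off-diagonal blocks into $P$, so \eqref{eq:HinfLMI-new} collapses verbatim to \eqref{eq:HinfLMI}. Hence feasibility of the standard LMI implies feasibility of the extended one, and by \Cref{lemma:Hinf} this chain is entered from $\|\mathbf{G}(z)\|^2_\infty < \mu$.

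The substantive direction is to eliminate $G$. I would write \eqref{eq:HinfLMI-new} as $\mathcal{Q} + \mathcal{B} G \mathcal{C}^\tr + \mathcal{C} G^\tr \mathcal{B}^\tr \succ 0$, where $\mathcal{B}^\tr = [\,A^\tr\ \ I\ \ 0\ \ C^\tr\,]$ and $\mathcal{C}^\tr = [\,0\ \ I\ \ 0\ \ 0\,]$ collect the coefficients multiplying $G$, and $\mathcal{Q}$ is the $G$-free part (whose $(2,2)$ block is $-P$). Taking $\mathcal{B}_\perp$ to be a basis of $\ker \mathcal{B}^\tr$ and using $\mathcal{B}^\tr \mathcal{B}_\perp = 0$, a congruence of \eqref{eq:HinfLMI-new} by $\mathcal{B}_\perp$ annihilates every term containing $G$, leaving $\mathcal{B}_\perp^\tr \mathcal{Q} \mathcal{B}_\perp \succ 0$; a direct block computation shows this reduced inequality is exactly the Schur complement of the $(2,2)$ block of the standard LMI \eqref{eq:HinfLMI}. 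Because the $(1,1)$ block of \eqref{eq:HinfLMI-new} already forces $P \succ 0$, reassembling by Schur yields \eqref{eq:HinfLMI}, and \Cref{lemma:Hinf} then delivers $\|\mathbf{G}(z)\|^2_\infty < \mu$. Equivalently, one may invoke the Projection/Elimination Lemma directly: its two projected conditions are precisely $P \succ 0$ together with this Schur complement.

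The main obstacle is identifying the correct slack decomposition $(\mathcal{B},\mathcal{C})$ and the annihilating congruence $\mathcal{B}_\perp$, and then verifying by hand that the $G$-free residual $\mathcal{B}_\perp^\tr \mathcal{Q} \mathcal{B}_\perp$ coincides with the Schur complement of \eqref{eq:HinfLMI}; the remaining work (the block multiplications and the Schur reassembly) is routine. Since the equivalence is routed through the bounded-real LMI of \Cref{lemma:Hinf} in both directions, no new $\mathcal{H}_\infty$ analysis is required beyond that lemma.
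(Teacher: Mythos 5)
The paper does not prove this lemma at all---it is imported verbatim from the cited reference [de2002extended]---so there is no in-paper proof to compare against; judged on its own, your argument is correct and complete. Both directions check out: setting $G = P$ collapses \eqref{eq:HinfLMI-new} to \eqref{eq:HinfLMI} exactly as you say, and for the converse the slack decomposition with $\mathcal{B}^\tr = [\,A^\tr\ \ I\ \ 0\ \ C^\tr\,]$, $\mathcal{C}^\tr = [\,0\ \ I\ \ 0\ \ 0\,]$ is the right one; with the natural basis $\mathcal{B}_\perp$ (second block row $-A^\tr x_1 - C^\tr x_4$) the residual $\mathcal{B}_\perp^\tr\mathcal{Q}\mathcal{B}_\perp$ is indeed the Schur complement of the $(2,2)$ block of \eqref{eq:HinfLMI}, and $P\succ 0$ is read off the $(1,1)$ block, so Schur reassembly closes the loop through \Cref{lemma:Hinf}. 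For the record, the cited reference argues the elimination direction differently: from $(G-P)^\tr P^{-1}(G-P)\succeq 0$ one gets $G+G^\tr - P \preceq G^\tr P^{-1} G$, so \eqref{eq:HinfLMI-new} implies the same inequality with $G^\tr P^{-1}G$ in the $(2,2)$ slot (which also shows $G$ is nonsingular), and a congruence by $\mathrm{diag}(I, G^{-1}P, I, I)$ then recovers \eqref{eq:HinfLMI} directly; that route avoids computing any kernel basis, while yours generalizes more readily to other slack-variable LMIs. One small imprecision in your parenthetical: the second projected condition of the Elimination Lemma is $\mathcal{C}_\perp^\tr\mathcal{Q}\mathcal{C}_\perp\succ 0$, which is the $(1,3,4)$ principal submatrix $\begin{bmatrix} P & B & 0\\ B^\tr & I & D^\tr\\ 0 & D & \mu I\end{bmatrix}\succ 0$ rather than just $P\succ 0$; this does not affect your main argument, since sufficiency is handled by the explicit choice $G=P$ and the submatrix is implied by \eqref{eq:HinfLMI} anyway.
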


\vspace{1mm}

We refer the interested reader to~\cite{de2002extended} for discussions on the features of this extended LMI \cref{eq:HinfLMI-new} compared to the standard LMI~\cref{eq:HinfLMI}. Using this extended $\mathcal{H}_\infty$ lemma, we can derive anther LMI to solve the right $\mathcal{H}_\infty$ filtering problem~\eqref{eq:hinf_filter}. 

\vspace{1mm}

\begin{theorem} \label{proposition:hinf_filtering_new}
    There exists $\mathbf{F}(z) \in \mathcal{RH}_\infty$ such that~\eqref{eq:hinf_filter} holds if and only if there exist symmetric matrices $E, H$, and matrices $X,Z,N,G$,  and $Q, F, L, R$ such that 
       \begin{align} \label{eq:filter_LMI_new}
      \begin{bmatrix}
        E & G & A X + B_1 L & A (X-N) + B_1 L & B_1 R - B_2 & 0 \\
        \star & H & Q & Q & F & 0 \\
        \star & \star & X + X^\tr - E & X - N + Z^\tr - G & 0 & X^\tr C^\tr + L^\tr D_1^\tr \\
        \star & \star & \star & Z + Z^\tr - H & 0 & (X-N)^\tr C^\tr + L^\tr D_1^\tr  \\
        \star & \star &\star  &\star  &  I & R^\tr D_1^\tr - D_2^\tr \\
        \star & \star & \star & \star & \star & \mu_1 I
      \end{bmatrix} \succ 0.
    \end{align}
    where $\star$ denotes the symmetric counterparts. If~\eqref{eq:filter_LMI_new} holds, a state-space realization of $F(z) = \hat{C}(zI - \hat{A})\hat{B} + \hat{D}$ is
         \begin{align} \label{eq:filter_LMI_state_space_new}
     \begin{bmatrix}
      \hat{A} & \hat{B} \\
      \hat{C} & \hat{D}
    \end{bmatrix} = \begin{bmatrix}
      U Z^{-1} & 0 \\
      0 & I
    \end{bmatrix}\begin{bmatrix}
      Q & F \\
      L & R
    \end{bmatrix}\begin{bmatrix}
      U^{-1} & 0 \\
      0 & I
    \end{bmatrix},% \\
    %Z &:= - N V U^\tr = X - N.
\end{align}
    where $U$ is an arbitrary non-singular matrix. 
\end{theorem}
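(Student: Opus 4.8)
The plan is to follow the proof of \Cref{theorem:hinf_filtering} step for step, replacing the standard bounded-real inequality (\Cref{lemma:Hinf}) by its dilated counterpart (\Cref{lemma:Hinf_new}). First I would fix a full-order state-space realization of the filter, $\mathbf{F}(z) = \left[\begin{smallmatrix} \hat{A} & \hat{B} \\ \hat{C} & \hat{D}\end{smallmatrix}\right]$ with $\hat{A}$ of the same size as $A$, and form the error system $\mathbf{P}_1(z)\mathbf{F}(z) - \mathbf{P}_2(z) = \left[\begin{smallmatrix} \tilde{A} & \tilde{B} \\ \tilde{C} & \tilde{D}\end{smallmatrix}\right]$ with $\tilde{A} = \left[\begin{smallmatrix} A & B_1\hat{C} \\ 0 & \hat{A}\end{smallmatrix}\right]$, exactly as in the proof of \Cref{theorem:hinf_filtering}. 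Applying \Cref{lemma:Hinf_new} to the squared bound $\|\mathbf{P}_1\mathbf{F}-\mathbf{P}_2\|^2_\infty < \mu^2$ then produces a matrix inequality in the error-system data that is bilinear in the realization $(\hat A,\hat B,\hat C,\hat D)$ and in \emph{two} free matrices: a Lyapunov matrix $\tilde{P}\succ 0$ and an unstructured slack matrix $\tilde{G}$.

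The essential difference from \Cref{theorem:hinf_filtering} is the object on which the change of variables acts. There, the products $\tilde{A}\tilde{P}$ and $\tilde{C}\tilde{P}$ forced the congruence transformation and the nonlinear substitution to be built from $\tilde{P}$ and $\tilde{P}^{-1}$. In the dilated inequality these are replaced by $\tilde{A}\tilde{G}$ and $\tilde{C}\tilde{G}$, so I would partition the \emph{slack} matrix $\tilde{G}$ conformably with the state split and assemble the congruence transformation $\operatorname{diag}(\tilde{T},\tilde{T},I,I)$ from the blocks of $\tilde{G}$ (rather than of $\tilde{P}$). Under this transformation $\tilde{T}^\tr \tilde{P}\tilde{T}$ yields the free symmetric Lyapunov blocks $\left[\begin{smallmatrix} E & G \\ \star & H\end{smallmatrix}\right]$, while $\tilde{T}^\tr \tilde{G}\tilde{T}$ yields a block of the form $\left[\begin{smallmatrix} X & X-N \\ Z & Z\end{smallmatrix}\right]$; subtracting the two reproduces exactly the $(3,3),(3,4),(4,4)$ entries $X+X^\tr-E$, $X-N+Z^\tr-G$, $Z+Z^\tr-H$ of \cref{eq:filter_LMI_new}. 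Defining $(Q,F,L,R)$ by absorbing $(\hat A,\hat B,\hat C,\hat D)$ into the transformed off-diagonal blocks---precisely as in \cref{eq:change_of_variables}, but with the slack blocks in place of the Lyapunov blocks---then linearizes $\tilde{A}\tilde{G}$, $\tilde{B}$ and $\tilde{C}\tilde{G}$ and fills in the remaining entries of \cref{eq:filter_LMI_new}, establishing the forward implication.

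For the converse I would, given a feasible $(E,H,G,X,Z,N,Q,F,L,R)$, run the substitution backwards: choose an arbitrary nonsingular $U$, recover $(\hat A,\hat B,\hat C,\hat D)$ through \cref{eq:filter_LMI_state_space_new}, and reassemble $\tilde{P}\succ 0$ and a slack $\tilde{G}$ of the required block form so that the dilated inequality of \Cref{lemma:Hinf_new} holds. The key enabling fact is that the $(4,4)$ block of \cref{eq:filter_LMI_new} forces $Z+Z^\tr \succ H$ while its leading block forces $H \succ 0$; hence $Z+Z^\tr\succ 0$, so $Z$ has positive-definite symmetric part and is invertible, which is exactly what makes $Z^{-1}$---and hence the recovery map $\left[\begin{smallmatrix}\hat A & \hat B \\ \hat C & \hat D\end{smallmatrix}\right] = \left[\begin{smallmatrix} UZ^{-1} & 0 \\ 0 & I\end{smallmatrix}\right]\left[\begin{smallmatrix} Q & F \\ L & R\end{smallmatrix}\right]\left[\begin{smallmatrix} U^{-1} & 0 \\ 0 & I\end{smallmatrix}\right]$---well defined. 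The main obstacle I anticipate is precisely this reconstruction: unlike in \Cref{theorem:hinf_filtering}, the identity $\tilde{P}\tilde{P}^{-1}=I$ is unavailable to pin down the off-diagonal blocks, so I must show that the transformed data can always be lifted back to a genuine pair $(\tilde P\succ 0,\tilde G)$ compatible with some full-order realization, and verify that the recovery formula---whose left/right factors differ from the $U^\tr Z^{-1},\,U^{-\tr}$ pattern of \cref{eq:filter_LMI_state_space}---indeed inverts the change of variables. Once invertibility of $Z$ and this lifting are in hand, the equivalence and the realization \cref{eq:filter_LMI_state_space_new} follow by reversing the congruence and substitution steps.
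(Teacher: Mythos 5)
The paper states this appendix theorem without any proof, so the only benchmark is the proof of \Cref{theorem:hinf_filtering}, which your proposal adapts in exactly the intended way: apply \Cref{lemma:Hinf_new} to the same error realization, build the congruence and the nonlinear change of variables from the slack matrix $\tilde{G}$ rather than the Lyapunov matrix (whose transformed blocks then remain the free variables $E$, $G$, $H$), and recover the filter via the invertibility of $Z$ forced by $Z+Z^\tr \succ H \succ 0$. Your identification of $\tilde{T}^\tr\tilde{G}\tilde{T}=\left[\begin{smallmatrix} X & X-N\\ Z & Z\end{smallmatrix}\right]$ reproduces the $(3,3)$, $(3,4)$, $(4,4)$ entries of \cref{eq:filter_LMI_new} correctly, so the approach is sound and essentially the one the paper intends.
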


\end{document}